\newcommand{\Hi}{\mathbb{H}^2}
\newcommand{\HitR}{\mathbb{H}^2 \times \mathbb{R}}
\newcommand{\Sigmaw}{\mathcal W}
\newtheorem{theorem}{Theorem}
\newtheorem{lemma}[theorem]{Lemma}
\newtheorem{claim}[theorem]{Claim}
\newtheorem{corollary}[theorem]{Corollary}
\newtheorem{definition}[theorem]{Definition}
\newtheorem{proposition}[theorem]{Proposition}
\author[1]{Giuseppe Pipoli}
\author[2]{Jo\~{a}o Paulo dos Santos}
\author[3]{Giuseppe Tinaglia}
\affil[1]{Department of Information Engineering, Computer Science and Mathematics,
Università degli Studi dell’Aquila}
\affil[2]{Departamento de Matemática, Universidade de Brasília, Brazil}
\affil[3]{King's College London, Mathematics, London, U.K.}
\title{On the geometry of the asymptotic boundary of translators in $\mathbb H^2\times \mathbb R$}
\begin{document}
\maketitle

\begin{abstract}
In this work, we study complete properly immersed translators in the product space
$\HitR$, focusing on their asymptotic behavior at infinity. We classify the asymptotic boundary components of these translators under suitable continuity assumptions. Specifically, we prove that if a boundary component lies in the vertical asymptotic boundary, it is of the form $\{p\}\times [T,\infty)$ or $\{p\}\times \mathbb R$, while if it lies in the horizontal asymptotic boundary, it is a complete geodesic. Our approach is inspired by earlier work on minimal and constant mean curvature surfaces in $\HitR$, with a key ingredient being the use of  symmetric translators  as barriers.
\end{abstract}

\section{Introduction}
\label{sec:introduction}

Translators arise naturally in the study of mean curvature flow as self-similar solutions that move by translation. Their geometric and analytic properties have been extensively studied, particularly in Euclidean space, see for instance \cite[Chapter 13]{book-EGF} , mainly because of their role in understanding of type-II singularities of mean curvature flow \cite{HS}. More recently, translators have been studied in hyperbolic settings such as $\mathbb H^3$, see for instance \cite{Bueno-Lopez-H3-1, Bueno-Lopez-H3-2, deLima-Ramos-dosSantos-H3}, and $\HitR$, see for instance~\cite{bueno-2018, bueno-lopez-2025,  helicoidal,lima-pipoli,GHLM-serrin, lira-martin-2019}. In this work, we focus on complete properly immersed translators in the product space 
$\HitR$, analyzing their asymptotic behavior at infinity.

$\HitR$ is one of Thurston’s eight model geometries, making it a natural setting for studying curvature-driven flows beyond the Euclidean case. The interplay between the negatively curved base and the product structure introduces new rigidity phenomena and influences the qualitative behavior of solutions. Understanding translators in this setting provides insight into the long-time behavior of mean curvature flow in negatively curved spaces.

We establish a classification of the asymptotic boundary components of such translators under suitable continuity assumptions and state here a representative special case.

\begin{theorem}\label{main}
Let $M$ be a complete properly immersed translator in $\HitR$ whose asymptotic boundary, $\partial_\infty M$, is  a proper family of disjoint immersed continuous curves so that $M\cup \partial_\infty M $ is a continuous surface with boundary.  Then, the following holds:
\begin{itemize}
    \item if $\gamma$ is a connected component of its vertical asymptotic boundary, then $\gamma=\{p\}\times [T,\infty)$ with $p\in \partial_\infty \Hi$ and $T\in
    \mathbb{R}$ or $\gamma=\{p\}\times \mathbb{R}$.
    \item if $\gamma$ is a connected component of its horizontal asymptotic boundary, then $\gamma$ is a complete geodesic.
\end{itemize}
\end{theorem}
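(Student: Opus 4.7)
The plan is to prove both items by a single barrier strategy. The comparison surfaces are drawn from the known families of symmetric translators in $\HitR$ --- rotationally symmetric translators, and translators invariant under a one-parameter group of isometries of $\Hi$ (hyperbolic, parabolic, or elliptic) --- whose asymptotic boundaries can be described explicitly from the classifications cited in the introduction. The hypothesis that $M\cup\partial_\infty M$ is a continuous surface with boundary is what converts a contact between $M\cup\partial_\infty M$ and a barrier at an asymptotic point into a genuine interior touching, after a small perturbation of the barrier by a vertical translation or an ambient isometry of $\HitR$ (both of which send translators to translators). The interior tangency principle for the translator equation then delivers the desired contradiction. I will refer to this combined tool as the \emph{tangency principle at infinity}.

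For the vertical part, let $\gamma$ be a connected component of $\partial_\infty M\cap(\partial_\infty\Hi\times\mathbb R)$ and let $\pi:\partial_\infty\Hi\times\mathbb R\to\partial_\infty\Hi$ be the projection. The first step is to show that $\pi(\gamma)$ is a single point $p$. If $\pi(\gamma)$ contained two distinct points, I would choose a geodesic $\sigma\subset\Hi$ whose endpoints at infinity separate them; a translator invariant under translation along $\sigma$ has vertical asymptotic boundary equal to the two vertical lines above $\partial_\infty\sigma$, and sliding such a barrier along $\sigma$ produces a first contact with $M$, violating the tangency principle at infinity. The second step is to show that, once $\gamma\subset\{p\}\times\mathbb R$, the set $\gamma$ cannot be bounded above: if $T^+=\sup\{t:(p,t)\in\gamma\}$ were finite, I would use a suitable symmetric translator (for instance a parabolic translator whose vertical asymptotic boundary is of the form $\{p\}\times[T,\infty)$), translated vertically from above, to obtain a first boundary touching at $(p,T^+)$, which is again a contradiction. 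Since the translation direction is upward, no symmetric barrier of the same shape rules out boundedness from below, which explains the two possibilities $\{p\}\times[T,\infty)$ and $\{p\}\times\mathbb R$ in the statement.

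For the horizontal part, let $\gamma$ be a connected component of $\partial_\infty M$ lying in $\Hi\times\{\pm\infty\}$. At a point $q\in\gamma$, I would pick a geodesic $\ell\subset\Hi$ through $q$ and use as a barrier a translator invariant under translation along $\ell$ whose horizontal asymptotic boundary at the relevant end is $\ell$ itself. Moving this barrier by hyperbolic isometries of $\Hi$ toward $\gamma$ yields, at the first moment of contact, a local coincidence $\gamma\cap U=\ell\cap U$ via the tangency principle at infinity. A standard open--closed continuation argument then forces $\gamma$ to be a complete geodesic of $\Hi$. The main obstacle in all of the above is establishing the tangency principle at infinity itself: the classical interior strong maximum principle for the translator equation is well understood, but transporting it to contacts at points of $\partial_\infty\HitR$ requires a careful reduction, via the continuity hypothesis on $M\cup\partial_\infty M$, to an interior comparison after perturbation. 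Once this boundary version of the principle is in hand, the verification that the cited families of symmetric translators contain barriers with the precise asymptotic traces needed in each step becomes essentially a catalog check.
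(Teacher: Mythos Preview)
Your proposal has the right philosophy---barrier arguments with symmetric translators---but the central device you rely on, the ``tangency principle at infinity,'' is not a tool one can simply invoke: establishing something equivalent to it is precisely the content of the proof. A touching of $M\cup\partial_\infty M$ and a barrier at a point of $\partial_\infty(\HitR)$ does not, under a small perturbation, automatically yield an interior tangency; the surfaces may separate, or the intersection may run off to infinity. The paper never uses such a principle. Instead it localizes: near a hypothetical bad asymptotic point it extracts a piece $M_\varepsilon\subset M$ lying in a thin half-space $\widetilde{\mathcal S}(p,\varepsilon)$ and proves (Claim~\ref{openslab} and its variants) that $M_\varepsilon$ is trapped in a horizontal slab of fixed thickness, using properness of $\partial_\infty^v M$. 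Only with this confinement can a compact piece of a barrier be slid so that all boundaries stay disjoint and a genuine finite interior contact is forced. This reduction to finite tangency is the actual work, and your outline defers it entirely.

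There is also a concrete error in your first step for the vertical boundary: you propose sliding a translator invariant under translation along $\sigma$ \emph{along $\sigma$ itself}, but that isometry fixes the barrier and produces no motion. The paper's barriers here are different from what you suggest. To show $\gamma\subset l_p$ it uses the rotational bowl: in Case~1 hyperbolically translated toward $p$ along an auxiliary geodesic, and in Case~2 after first stretching $M_\alpha$ by hyperbolic translations so a vertically translated bowl can sweep through. To rule out $\sup_\gamma t<\infty$ it uses the hyperbolic translating catenoids $\mathcal W_\varepsilon\in\Omega$ shrinking onto $l_p$ (not a parabolic translator as you suggest). For the horizontal boundary it proves the separate Lemma~\ref{nonfinitelemma}, again with elements of $\Omega$ moved perpendicularly to an auxiliary geodesic, and then deduces geodesy by contradiction rather than by an open--closed continuation. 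Each step needs its own boundary-disjointness verification, and those verifications are where the argument lives.
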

For example, see Corollary~\ref{nonfinitethm} for a sharper result concerning the horizontal asymptotic boundary. In fact, the second bullet in Theorem~\ref{main} is a consequence of the following theorem.

\begin{theorem}\label{main2}
Let $M$ be a complete properly immersed translator in $\HitR$. Let $\alpha^\pm\subset\partial_{\infty}^{\pm} M\subset \Hi\times {\{\pm \infty\}}$ be a properly embedded arc in the vertical boundary of $M$ such that $\overline{[\partial_{\infty}^{\pm} M\setminus \alpha^\pm]}\cap \alpha^\pm=\emptyset$ and  $M\cup \alpha^\pm $ is a continuous surface with boundary.  Then, $\alpha^\pm$ is a geodesic arc in $\Hi\times \{\pm\infty\}$.
\end{theorem}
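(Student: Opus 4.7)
The plan is to argue by contradiction using symmetric translators as barriers, in the spirit of the maximum-principle arguments developed for minimal and CMC surfaces in $\HitR$. I treat the case of $\alpha^+$; the case of $\alpha^-$ follows analogously after reflecting across $\Hi\times\{0\}$. Assume $\alpha^+$ is not a geodesic arc. Since $\alpha^+$ is a continuous properly embedded arc in $\Hi\times\{+\infty\}$, there exist an interior point $p\in\alpha^+$ and a complete geodesic $\gamma\subset\Hi\times\{+\infty\}$ through $p$ such that, on some open neighborhood $U$ of $p$ in $\Hi\times\{+\infty\}$, the arc $\alpha^+\cap U$ lies in a closed half-plane $\overline{H^+}$ bounded by $\gamma$ and is not contained in $\gamma$. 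The disjointness hypothesis $\overline{[\partial_{\infty}^{+} M\setminus \alpha^+]}\cap\alpha^+=\emptyset$ guarantees that no other component of $\partial_\infty M$ accumulates at $p$, so I may fix a compact neighborhood $V$ of $p$ in a natural compactification of $\HitR$ whose intersection with $\partial_\infty M$ is contained in $\alpha^+$.

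Next, I would invoke the symmetric translators promised in the introduction. Fix a symmetric translator $\Sigma_0\subset\HitR$ whose horizontal asymptotic boundary in $\Hi\times\{+\infty\}$ contains $\gamma$, for instance a translator invariant under hyperbolic translations along $\gamma$. Let $\phi_s$, $s\ge 0$, be the one-parameter family of hyperbolic translations along the geodesic in $\Hi$ orthogonal to $\gamma$ at $p$, pushing away from $\overline{H^+}$, and set $\Sigma^s:=\phi_s(\Sigma_0)$ and $\gamma_s:=\phi_s(\gamma)$. Since the translator equation is preserved by horizontal hyperbolic isometries, each $\Sigma^s$ is again a translator whose horizontal asymptotic boundary contains $\gamma_s$, the equidistant of $\gamma$ at distance $s$ on the $H^-$ side. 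For $s$ sufficiently large, $\Sigma^s\cap M\cap V=\emptyset$; decrease $s$ down to the infimum $s_0\ge 0$ preserving this disjointness, and examine the first-contact configuration. If the contact at $s_0$ occurs at an interior point of $\HitR$, the strong maximum principle for the quasilinear translator equation forces $M$ and $\Sigma^{s_0}$ to coincide on a common connected component, so their horizontal asymptotic boundaries would agree near $p$, contradicting $\alpha^+\not\subset\gamma_{s_0}$.

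The main obstacle is the case in which no interior contact occurs and the first contact is at infinity, which forces $s_0=0$ and the contact point $p\in\gamma$. Here the classical strong maximum principle does not apply, and the continuity hypothesis on $M\cup\alpha^+$ becomes essential. My plan is to pass to a local chart near $p$ in a compactification of $\HitR$, write both $M\cup\alpha^+$ and $\Sigma_0\cup\gamma$ as continuous graphs solving a (possibly degenerate) quasilinear elliptic PDE extending continuously to the ideal boundary, and apply a Hopf-type boundary maximum principle: if $M\cup\alpha^+$ lies on one side of $\Sigma_0\cup\gamma$ and they coincide at the boundary point $p$, they must coincide on a neighborhood of $p$, yielding $\alpha^+\subset\gamma$ near $p$ and a contradiction. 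Establishing this boundary maximum principle at the horizontal ideal boundary for the translator PDE is the delicate step; it is precisely here that the continuity hypothesis and the fine asymptotic control afforded by the symmetric barriers $\Sigma^s$ provide the leverage needed to close the argument.
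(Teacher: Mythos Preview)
Your outline has the right flavor---slide a hyperbolic-invariant barrier toward $M$ and look for a first contact---but there are two genuine gaps.

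\textbf{First, the reduction of $\alpha^-$ to $\alpha^+$ by reflection is wrong.} The map $(x,t)\mapsto(x,-t)$ sends a surface satisfying $H=\langle N,\partial_h\rangle$ to one satisfying $H=-\langle N,\partial_h\rangle$; it takes upward translators to downward translators, not to translators of the same type. The two cases $\partial_\infty^+ M$ and $\partial_\infty^- M$ are genuinely asymmetric and must be handled by separate (though parallel) arguments; in the paper this is done by translating $M$ \emph{down} in the $+$ case and \emph{up} in the $-$ case before running the barrier argument.

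\textbf{Second, and more seriously, your treatment of the ``contact at infinity'' case is not a proof.} You propose to write $M\cup\alpha^+$ and $\Sigma_0\cup\gamma$ as continuous graphs near the ideal point $p$ and apply a Hopf-type boundary lemma there. But no such lemma is available: the translator equation degenerates at the horizontal ideal boundary, you have only $C^0$ control of $M\cup\alpha^+$ from the hypothesis, and a Hopf lemma needs at least one-sided differentiability of the difference along an inward direction. Asserting that ``the continuity hypothesis and the fine asymptotic control afforded by the symmetric barriers provide the leverage'' is precisely the step that has to be \emph{proved}, and it is not clear it can be.

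The paper sidesteps this difficulty entirely. Rather than sliding the barrier until contact and then analyzing contact at infinity, it first \emph{isolates} a compact sub-arc $\beta\subset\alpha^+$ with $\partial\beta\subset\Gamma$ for some geodesic $\Gamma$ and $\beta\not\subset\Gamma$, passes to the component of $M\setminus(\Gamma\times\mathbb R)$ carrying $\beta$, and then vertically translates so that a truncated piece $\Sigma$ has $\partial\Sigma\subset(\Gamma\times\mathbb R)\cup(\Hi\times\{0\})$, $\partial_\infty^v\Sigma=\emptyset$, and $\beta$ as its \emph{only} horizontal asymptotic boundary. This piece satisfies the hypotheses of a separate lemma (Lemma~\ref{nonfinitelemma} in the paper), whose barrier argument is engineered so that the hyperbolic catenoids $\mathcal W^\mu$ are disjoint from $\partial\Sigma$ and from $\partial_\infty\Sigma$ for every $\mu$ in the range, while being forced to hit the interior of $\Sigma$ at some $\mu$. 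Thus the first contact is \emph{guaranteed} to be interior, and the ordinary tangency principle suffices; no boundary maximum principle at infinity is ever invoked. That isolation-then-truncation step is the idea your argument is missing.
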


  Our approach is inspired by techniques developed to study the  geometry of the asymptotic boundary of minimal and constant mean curvature surfaces in 
$\HitR$ \cite{klma, nsest1,et4, et5}, with a key ingredient being the use of  symmetric translators  as barriers \cite{bueno-lopez-2025,lima-pipoli, lira-martin-2019}.  Throughout this paper, we adopt the cylinder model for 
 $\HitR$. This paper is organized as follows. In Section~\ref{geocomp}, we review the geodesic compactification of $\HitR$ and define the different components of the asymptotic boundary of a subset of $\HitR$.
 In Section~\ref{MCF}, we give a brief introduction to the study of translators of mean curvature flow in
$\HitR$, focusing on properties that are needed for the proof of our main results. 
In Section~\ref{symmetric-translators}, we describe examples of symmetric translators, focusing on the examples needed for the proof of our main results. 
Finally, we discuss the geometry of the vertical (resp.  horizontal) asymptotic boundary of translators of $\HitR$ in Section \ref{finite} (resp. Section \ref{nonfinite}).

\section{Geodesic Compactification of \texorpdfstring{$\mathbb{H}^2 \times\mathbb{R}$}{H2 x R}}\label{geocomp}

In this section, we recall the construction of the geodesic compactification of $\HitR$, see for instance~\cite{et5}. We first define convergence of sequences in 
$\Hi$
  to points at infinity. Then, we establish the structure of the asymptotic boundary of
$\HitR$, distinguishing between finite and non-finite asymptotic points.

 Let $y_0 \in \Hi$ be a fixed point of $\Hi$ and let $x_{\infty} \in \partial_{\infty}\Hi$. We denote by $[y_0, x_{\infty}) \subset \Hi$ the geodesic ray issuing from $y_0$ and with asymptotic boundary $x_{\infty}$. For any $\rho >0$ we denote by $\gamma_{\rho} \subset \Hi$ the geodesic intersecting the ray $[y_0, x_{\infty})$ orthogonally at point $y_{\rho}$ such that $d_{\Hi} (y_0, y_{\rho}) = \rho$. Let $\gamma^+_{\rho}$ be the component of $\Hi \setminus \gamma_{\rho}$ that contains $x_{\infty}$ in its asymptotic boundary.
 
Let $(x_n)_{n\in\mathbb N}$ be a sequence of points in $\Hi$. Using the previously defined notation, we say that $(x_n)_{n\in\mathbb N}$ converges to $x_{\infty}$, denoted by $x_n \to x_{\infty}$, if for any $\rho > 0$ there exists $n_{\rho} \in \mathbb{N}$ such that $x_n  \in \gamma^+_{\rho}$ for any $n \geq n_{\rho}$.

We observe that if we choose the Poincaré disc model of $\Hi$, then $x_n \to x_{\infty}$ if and only if the sequence $(x_n)_{n\in\mathbb N}$ converges to $x_{\infty}$ in the Euclidean sense.

We next discuss the asymptotic boundary of $ \mathbb{H}^2 \times \mathbb{R} $. First note that
    \[
    \partial_\infty(\mathbb{H}^2 \times \mathbb{R}) := [\partial_\infty \mathbb{H}^2 \times \mathbb{R}] \cup [\mathbb{H}^2 \times \{+\infty\}]\cup  [\mathbb{H}^2 \times \{-\infty\}] \cup [\partial_\infty \mathbb{H}^2 \times \{ +\infty\}]\cup [\partial_\infty \mathbb{H}^2 \times \{-\infty \}].
    \]
    This decomposition means that for a divergent sequence $(p_n)_{n\in\mathbb N}$ of $\mathbb{H}^2 \times \mathbb{R}$, there are three possibilities for converging to infinity (up to extracting a subsequence). That is, setting $p_n = (x_n, t_n) \in \mathbb{H}^2 \times \mathbb{R}$, we have the following cases:
    \begin{itemize}
        \item $x_n \to x_\infty \in \partial_\infty \mathbb{H}^2$ and $t_n \to t_0 \in \mathbb{R}$. We say that $p_\infty := (x_\infty, t_0) \in \partial_\infty \mathbb{H}^2 \times \mathbb{R}$ is an asymptotic point at finite height.
        \item $x_n \to x_0 \in \mathbb{H}^2$ and $t_n \to \pm \infty$. That is, $(p_n)_{n\in\mathbb N}$ converges to $p_\infty := (x_0, \pm \infty) \in \mathbb{H}^2 \times \{-\infty, +\infty\}$.
        \item $x_n \to x_\infty \in \partial_\infty \mathbb{H}^2$ and $t_n \to \pm \infty$. That is, $(p_n)_{n\in\mathbb N}$ converges to $p_\infty := (x_\infty, \pm \infty) \in \partial_\infty \mathbb{H}^2 \times \{-\infty, +\infty\}$.
    \end{itemize}

  Let $A \subset \mathbb{H}^2 \times \mathbb{R}$ be a nonempty subset. We say that a point $p_\infty \in \partial_\infty(\mathbb{H}^2 \times \mathbb{R})$ is an asymptotic point of $A$ if there is a sequence $(p_n)_{n\in\mathbb N}$ of $A$ converging to $p_\infty$. The set of asymptotic points of $A$, called the asymptotic boundary of $A$, is denoted by $\partial_\infty A$.
    
The set of asymptotic points at finite height is called the {\bf vertical asymptotic boundary} and is denoted by $\partial_\infty^v A$ (Note that in \cite{et5} the name \emph{finite asymptotic boundary} is used). The complement $\partial_\infty A \setminus \partial_\infty^v A$ is called the {\bf non-finite asymptotic boundary} of $A$.  With regards to the non-finite asymptotic boundary of $A$, we distinguish between the points that are contained in $\Hi \times \left\{ \pm \infty \right\}$ and the points that are contained in $\partial_\infty \mathbb{H}^2 \times \{ \pm \infty\}$. We call the non-finite asymptotic boundary contained in $\Hi \times \left\{ \pm \infty \right\}$ the {\bf horizontal asymptotic boundary}---respectively positive or negative---and we denote it by $\partial_{\infty}^{\pm}A$ respectively. We call the non-finite asymptotic boundary contained in $\partial_\infty \mathbb{H}^2 \times \{ \pm \infty\}$ the {\bf corner asymptotic boundary}---respectively positive or negative---and we denote it by $\partial^{c,\pm}_{\infty}A$ respectively.

\section{Translators of the Mean Curvature Flow}\label{MCF}
An immersion $f: M^n \rightarrow \widetilde{M}^{n+1}$ of a smooth manifold $M^n$ into a Riemannian ambient space $\widetilde{M}^{n+1}$ evolves under the mean curvature flow (MCF for short) if there exists a smooth one-parameter family of immersions $F : M \times I \rightarrow \widetilde{M}$, $t \in [0,T)$ and $F(\cdot,0)=f(\cdot)$ such that
\begin{equation}
\dfrac{\partial F}{\partial t} (p,t) = \vec{H}(p,t),\, (p,t) \in M \times I, \label{eq:main-MCF}
\end{equation}
where $\vec{H}(p,t)$ is the mean curvature vector field of the immersion $F_t : M \rightarrow \widetilde{M}$, where $F_t(p):=F(p,t)$ for a fixed $t \in I$.

A soliton solution to the MCF is given by $F(p,t)={\Gamma}_t(f(p))$ where ${\Gamma}_t$ is a one-parameter subgroup of the group of isometries of $\widetilde{M}$. Denoting with $\xi \in T\widetilde{M}$ the corresponding Killing vector field, it is well known (see for example \cite{HS-2000}) that $f: M^n \rightarrow \widetilde{M}^{n+1}$ evolves under a soliton solution to MCF corresponding to $\Gamma_t$ if and only if the mean curvature $H$ of $f$ and its unit normal $N$ satisfies, up to tangential diffeomorphisms,
\begin{equation}
H = \langle N, \xi \rangle. \label{eq:soliton-general}
\end{equation}

An important class of soliton solutions to the MCF is given by the \emph{translating solitons} in the Euclidean space $\mathbb{R}^{n+1}$, i.e., those solutions that evolve by translations of $\mathbb{R}^{n+1}$ in a given direction $v$. In this case, the immersion $f: M^n \rightarrow \mathbb{R}^{n+1}$ satisfying \eqref{eq:soliton-general} with $\xi=v$ is called \emph{translator}.  Special attention has been given to translators in $\HitR$ where the translation is given in the direction of the factor $\mathbb{R}$. Namely, those surfaces $M$ in $\mathbb{H}^2 \times \mathbb{R}$ satisfying 
\begin{equation}
H = \langle N, \partial_h \rangle. \label{eq:translators}
\end{equation}
where $\partial_h$ denotes the standard coordinate vector field of the factor $\mathbb{R}$. Trivial examples of translators are $\gamma\times\mathbb R$, where $\gamma$ is any geodesic of $\mathbb H^2$. Non-trivial examples of translators satisfying \eqref{eq:translators} are the symmetric ones, i.e., those translators invariant under isometries of $\mathbb{H}^2 \times \mathbb{R}$ that leaves $\mathbb{R}$ fixed. Such examples are given in details in Section \ref{symmetric-translators}.

In this work, we focus on complete properly immersed translators in the product space $\HitR$ satisfying equation \eqref{eq:translators}, analyzing their asymptotic behavior at infinity.

\subsection{Tangency principle}

It is well-known that translators in the Euclidean space $\mathbb{R}^3$ can be regarded as minimal in a conformally flat metric. As a consequence, a tangency principle holds for translators (see Theorem 2.3 in \cite{martin-topology}). Similarly, as stated in Theorem 2.1 of \cite{bueno-2018} (vertical) translators in $\HitR$ are minimal surfaces with the conformally changed metric $e^h \langle , \rangle$, where $h$ is the height function of $\HitR$. This allows an analogous tangency principle for such surfaces (Theorem 2.2 of \cite{bueno-2018}):

\begin{theorem}\label{tangency}
Let $\Sigma_1$ and $\Sigma_2$ be two connected translators in $\HitR$ with possibly non-empty boundaries $\partial \Sigma_1$, $\partial \Sigma_2$. Suppose that one of the following statements holds
\begin{itemize}
\item There exists $p \in int(\Sigma_1) \cap int(\Sigma_2)$ with $(N_1)_p = (N_2)_p$, where $(N_i)_p$ is the unit normal of $\Sigma_i$ at $p$.
\item There exists $p \in \partial \Sigma_1 \cap \Sigma_2$ with $(N_1)_p = (N_2)_p$ and $(\xi_1)_p = (\xi_2)_p$, where $(\xi_i)_p$ is the interior unit conormal of $\partial \Sigma_i$ at $p$.
\end{itemize}
 Assume that $\Sigma_1$ lies locally around $p$ at one side of $\Sigma_2$. Then, in either situation, both surfaces agree in a neighborhood of $p$. Moreover, if both surfaces $M_i$ are complete, then $M_1 = M_2$.
\end{theorem}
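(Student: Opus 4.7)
The plan is to reduce the statement to the classical tangency (maximum) principle for minimal surfaces in a Riemannian manifold, exploiting the fact -- recalled in the paragraph preceding the theorem and attributed to \cite{bueno-2018} -- that a surface $\Sigma\subset\HitR$ satisfies the translator equation \eqref{eq:translators} if and only if it is minimal with respect to the conformally related metric $\widetilde g = e^{h}\langle\,,\,\rangle$, where $h$ denotes the height function. This equivalence is verified by the standard formula for the transformation of the mean curvature under a conformal change $\widetilde g = e^{2\phi}g$: for an immersed surface with unit normal $N$, $\widetilde H = e^{-\phi}\bigl(H - 2\langle \nabla\phi,N\rangle\bigr)$. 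Choosing $\phi = h/2$ yields $\widetilde H = 0 \iff H = \langle N,\partial_h\rangle$, which is precisely \eqref{eq:translators}.

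With this reduction, both $\Sigma_1$ and $\Sigma_2$ are minimal surfaces of $(\HitR,\widetilde g)$, and the hypotheses translate verbatim: at $p$ they are tangent (with matching unit normals and, in the boundary case, matching interior unit conormals) and one lies locally on one side of the other. I would then apply the strong maximum principle for minimal surfaces. In the interior case, write both surfaces as graphs $u_1, u_2$ over a common neighborhood of the origin in $T_p\Sigma_1=T_p\Sigma_2$ using $\widetilde g$-normal coordinates; subtracting the minimal surface equations satisfied by $u_1$ and $u_2$ shows that the difference $w = u_1-u_2$ satisfies a linear homogeneous second-order elliptic equation, while the side condition forces $w \geq 0$ (or $\leq 0$) with $w(p) = 0$. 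Hopf's strong maximum principle then forces $w \equiv 0$ in a neighborhood of $p$. In the boundary case, the matching of the interior conormals provides the appropriate Neumann data for $w$ at the boundary point, and the Hopf boundary lemma rules out a strict sign for $\partial_{\nu}w(p)$, yielding the same local conclusion.

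For the global conclusion, one appeals to the real-analyticity of solutions of the minimal surface equation in analytic ambient coordinates: since $\widetilde g$ is real-analytic, so are minimal immersions, and the set where the two complete connected surfaces coincide is open by the local step and closed by continuity, so by connectedness $M_1 = M_2$. The main obstacle in carrying out this plan is the boundary case: one must choose a local representation in which both $\partial\Sigma_i$ become graphs over a common arc in $T_p\Sigma_i$, so that the conormal matching translates cleanly into the Neumann-type condition required by the Hopf boundary lemma. The interior case, by contrast, is entirely standard once the conformal reduction has been performed.
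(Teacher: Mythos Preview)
The paper does not give its own proof of this theorem: it is quoted verbatim as Theorem~2.2 of \cite{bueno-2018}, after recalling (Theorem~2.1 of the same reference) that translators in $\HitR$ are precisely the minimal surfaces of the conformal metric $e^{h}\langle\,,\,\rangle$. Your proposal is exactly the argument that lies behind that citation: reduce to minimality via the conformal change, then invoke the interior and boundary strong maximum principles for the quasilinear elliptic minimal surface operator, and propagate the local coincidence globally by analyticity and connectedness. So your approach is correct and coincides with what the paper (implicitly, through the reference) relies on; there is nothing further to compare.
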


\section{Symmetric translators} \label{symmetric-translators}

In what follows we present three classes of symmetric translators of $\HitR$, following the terminology of \cite{ lima-pipoli,umbilical}. In the hyperbolic space $\Hi$, we consider three special types of one-parameter families of isometries: the rotations around a fixed point (elliptic isometries), the translations along horocycles sharing the same point at infinity (parabolic isometries), and the translations along a fixed
geodesic (hyperbolic isometries). A nice property of these isometries is the fact that each one fixes a family of curves in $\Hi$ with constant geodesic curvature. Indeed, an elliptic isometry fixes a family of concentric geodesic circles, whereas a parabolic (resp. hyperbolic) translation fixes a family
of parallel horocycles (resp. lines equidistant to a given geodesic or equidistant lines, for short). We extend these isometries of $\Hi$ in a natural way to isometries of $\HitR$ by fixing the factor $\mathbb{R}$ pointwise, and we will keep the terminology for each of these extensions, i.e., elliptic, parabolic and hyperbolic isometries of $\HitR$. In this context, a surface $\Sigma$ is called symmetric if it is invariant under the action of such groups of isometries. We remark the property that each symmetric surface is foliated by vertical translations of its corresponding family curves of constant geodesic curvature. Finally, we call $\Sigma$ rotational, if it is invariant under elliptic isometries, and parabolic (resp. hyperbolic) if it is invariant by horizontal parabolic translations (resp. hyperbolic translations). Symmetric translators of $\HitR$ where considered in \cite{bueno-2018, bueno-lopez-2025, lima-pipoli, lira-martin-2019}, and they are given as follows.

\begin{itemize}
    \item Translators of $\HitR$ invariant under parabolic translations (Theorem 2 in \cite{lima-pipoli} and Theorem 11 in \cite{lira-martin-2019}): the \emph{parabolic translating catenoid} belongs to a family of properly embedded translators $\left\{ \Sigma_{\lambda},\,\lambda>0 \right\}$ which are homeomorphic to $\mathbb{R}^2$ and are given by the union of two graphs $\Sigma^+$ and $\Sigma^{-}$ over the complement of a horoball bounded by a horocircle $\mathcal{H}_{\lambda}$ which have unbounded height and satisfy $\partial \Sigma^{\pm} = \mathcal{H}_0$. Its set of points of minimal height is a horocircle contained in a horizontal hyperplane $\Hi \times \{t_0\}$. The \emph{parabolic bowl} is an entire vertical graph with unbounded height from above and from below and it has constant mean curvature. See Figure \ref{fig:parabolic}. Using the nomenclature in Section \ref{geocomp}, the vertical asymptotic boundary of the parabolic bowl is $\{p\}\times\mathbb R$, while its corner asymptotic boundary is 
    $[\partial_{\infty}\mathbb H^2\times\{+\infty\}]\cup[\{p\}\times\{-\infty\}]$.  The vertical asymptotic boundary of the parabolic translating catenoids is $\{p\}\times[T,+\infty)$, while its corner asymptotic boundary is 
    $\partial_{\infty}\mathbb H^2\times\{+\infty\}$. Finally, the horizontal asymptotic boundary is empty for each of these examples.

    \begin{figure}[htbp!]
\begin{minipage}{0.45\linewidth}
    \begin{center}
\includegraphics[width=0.8\linewidth]{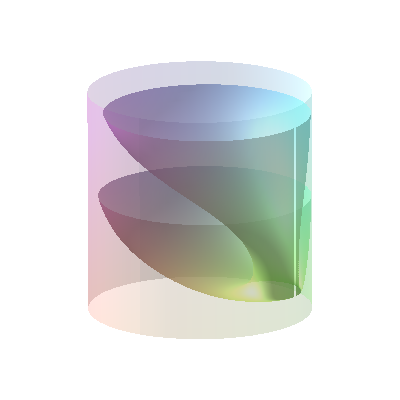}
\end{center}
\end{minipage}
\begin{minipage}{0.45\linewidth}
\begin{center} \vspace{0.1cm}
\includegraphics[width=0.8\linewidth]{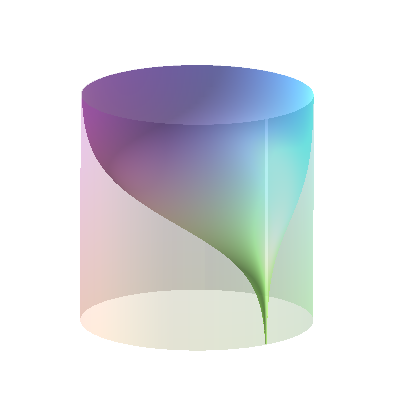}
\end{center}
\end{minipage}
\caption{Parabolic translating catenoid (left); Parabolic bowl (right)}
\label{fig:parabolic}
\end{figure}

\end{itemize}

\begin{itemize}
    \item Translators of $\HitR$ invariant under elliptic translations (Theorem 1 in \cite{lima-pipoli}, Theorem 7 in \cite{lira-martin-2019}, and Theorems 3.3 and 3.4 in \cite{bueno-2018}): the \emph{rotational translating catenoid} belongs to a family of a properly embedded annular rotational translators $\left\{ \Sigma_{\lambda},\,\lambda>0 \right\}$ given by the union of two graphs $\Sigma^+$ and $\Sigma^{-}$ over the complement of a geodesic circle $B_{\lambda}(o) \subset \Hi$ which have unbounded height and satisfy $\partial \Sigma^{\pm} = \partial B_{\lambda}(o)$. Its set of points of minimal height is a geodesic circle centered at the axis of rotation which is contained in a horizontal hyperplane $\Hi \times \{t_0\}$. The \emph{rotational bowl} is a strictly convex entire vertical graph with unbounded height and contained in a closed half-space $\Hi \times [t_0,+\infty).$ See Figure \ref{fig:rotational}. The asymptotic boundary of those translators contains just corner points, in particular it is $\partial_{\infty}\mathbb H^2\times\{+\infty\}$.

\end{itemize}

    \begin{figure}[htbp!]
\begin{minipage}{0.45\linewidth}
    \begin{center}
\includegraphics[width=0.8\linewidth]{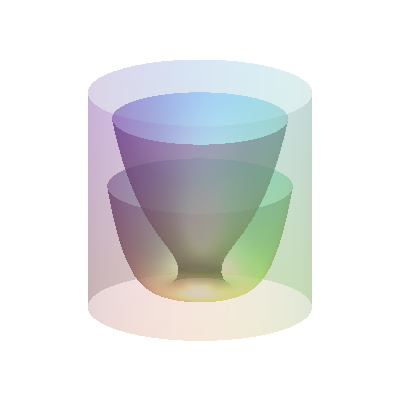}
\end{center}
\end{minipage}
\begin{minipage}{0.45\linewidth}
\begin{center} \vspace{0.1cm}
\includegraphics[width=0.8\linewidth]{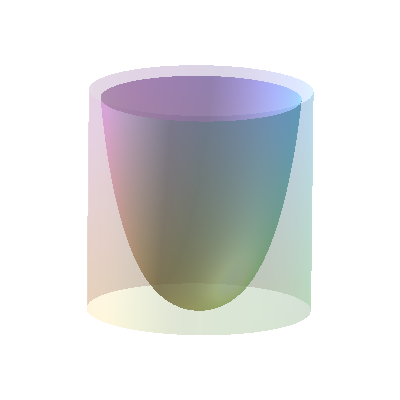}
\end{center}
\end{minipage}
\caption{Rotational translating catenoid (left); Rotational bowl (right)}
\label{fig:rotational}
\end{figure}

\begin{itemize}
   
\item  Translators of $\HitR$ invariant under hyperbolic translations (Theorem 3 in \cite{lima-pipoli}, Theorem 12 in \cite{lira-martin-2019}, and Theorem 4.4 in \cite{bueno-lopez-2025}). Two types of translators exhibit such symmetries. The first one is called \emph{hyperbolic bowl}: it is an entire vertical graph with unbounded height and contained in a closed half-space $\Hi \times [t_0,+\infty),$ tangent to an equidistant line contained in the horizontal hyperplane $\Hi \times \{t_0\}$. The second type consists of a one-parameter family of properly embedded translators with the following properties. Denoting by $\mathcal{E}_{r}$ the curve equidistant from a fixed geodesic $\mathcal{E}_0$, at a distance $\left|r\right|$, for any $r_0$ there exists a translator invariant under hyperbolic translation given by the union of two graphs, both unbounded from above and defined over the domain of $\mathbb H^2$ foliated by $\{\mathcal{E}_{r} | r>r_0\}$. Moreover, any of such surfaces is tangent to $\mathcal E_{r_0}\times\mathbb{R}$, it is contained in $\mathbb H^2\times[t_0,\infty)$ and it is tangent to $\mathbb{H}^2\times\{t_0\}$ along a $\mathcal E_{\rho}$,  where $\rho=\rho(r_0)$ and $t_0$ can be chosen freely up to vertical translations. In particular, any element of this family is homeomorphic to $\mathbb{R}^2$. See Figure \ref{fig:equidistant} and \ref{fig:A}.

\begin{figure}[htbp!]
\begin{minipage}{0.31\linewidth}
    \begin{center}
\includegraphics[width=1\linewidth]{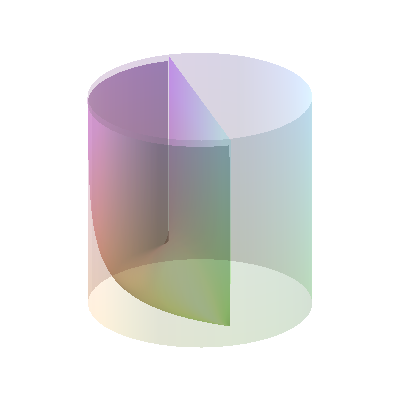}
\end{center}
\end{minipage}
\begin{minipage}{0.31\linewidth}
    \begin{center}
\includegraphics[width=1\linewidth]{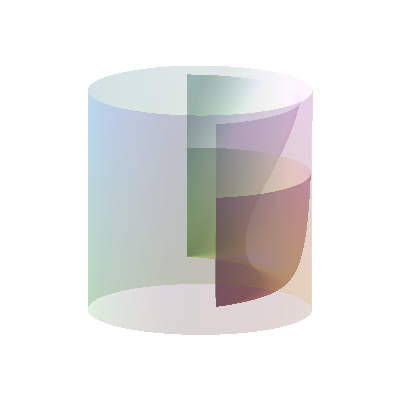}
\end{center}
\end{minipage}
\begin{minipage}{0.31\linewidth}
\begin{center} \vspace{0.1cm}
\includegraphics[width=1\linewidth]{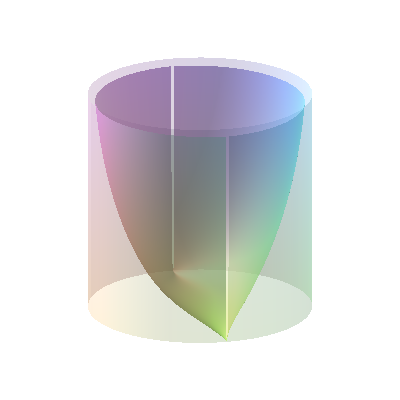}
\end{center}
\end{minipage}
\caption{Using the nomenclature of \cite{lima-pipoli}: hyperbolic translating catenoid with $r_0=0$ (left), $r_0>0$ (center) and hyperbolic bowl (right)}
\label{fig:equidistant}
\end{figure}

Note that the construction is independent on the choices of the initial geodesic $\mathcal E_0$ and the minimal height $t_0$, in fact, the image of a translator under any isometry of the ambient space is still a translator.
This family of surfaces will play a crucial role in the proofs that follow, so we reserve the symbol $\Omega$ for it and $\Sigmaw$ for its elements. These translators are known in the literature by at least two different names: they are called \emph{hyperbolic translating catenoids} in \cite{lima-pipoli} and \emph{hyperbolic v-grim reapers} in \cite{bueno-lopez-2025}. 

\begin{figure}[htbp!]
\begin{minipage}{0.45\linewidth}
    \begin{center}
\includegraphics[width=0.8\linewidth]{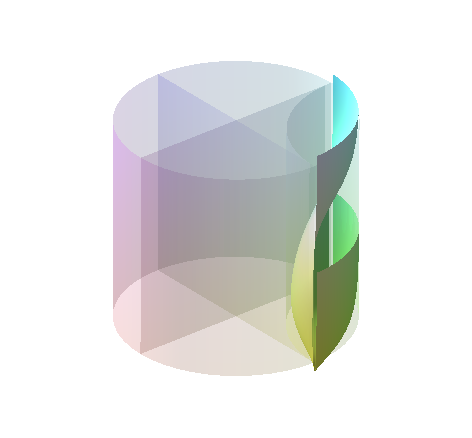}
\end{center}
\end{minipage}
\begin{minipage}{0.45\linewidth}
\begin{center} \vspace{0.1cm}
\includegraphics[width=0.6\linewidth]{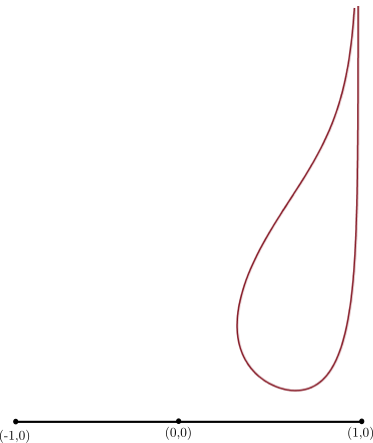}
\end{center}
\end{minipage}
\caption{An element of $\Omega$, $r_0>0$ (left) and $\Sigmaw\cap [c\times\mathbb R]$, $\Sigmaw\in\Omega$, where $c$ is the geodesic in $\mathbb{H}^2$ whose end points are $(1,0)$ and $(-1,0)$ and $\Sigmaw$ is determined by the geodesic whose end points are $(0,1)$ and $(0,-1)$, tangent to the horizontal plane $\mathbb{H}^2 \times \left\{0 \right\}$ and to the vertical plane $\mathcal{E}_r$ (right)} \label{fig:A}
\end{figure}

For any translator invariant under hyperbolic translation, the vertical asymptotic boundary consists in two half lines $\{p_i\}\times[t_0,+\infty)$, where the $p_i$ are the end points of the geodesic $\mathcal E_0$. The corner asymptotic boundary is the whole $\partial_{\infty}\mathbb H^2\times\{+\infty\}$ for the hyperbolic bowl and a connected arc with end points $\{p_i\}\times\{+\infty\}$ in the other cases. Finally, the horizontal asymptotic boundary is empty except in the special case $r_0=0$ where it is $\mathcal E_0\times\{+\infty\}$.
\end{itemize}

\section{The vertical asymptotic boundary.}\label{finite}
In this section, we study the behavior of the vertical asymptotic boundary of a translator. The first bullet in Theorem~\ref{main} is a consequence of Theorem \ref{finiteboundary}. First, we need this definition.

\begin{definition}\label{cylinder}
    Given $\varepsilon>0$ and $(p,0) \in \partial_{\infty} \Hi \times \left\{ 0 \right\}$, let $q_1, q_2 \in \partial_{\infty} \Hi \times \left\{ 0 \right\}$ be the two distinct points such that the Euclidean distance from $p$ to $q_i$ is $\varepsilon$, $i=1,\,2$, and let $c$ be the complete geodesic with asymptotic boundary $\left\{q_1, q_2\right\}$. Then we define $\mathcal{S}(p,\varepsilon)$ to be the vertical geodesic plane $c \times \mathbb{R}$, see Figure~\ref{fig:B}. Furthermore, we define $\widetilde{\mathcal{S}}(p,\varepsilon)$ to be the connected component of $[\Hi \times \mathbb{R}]\setminus \mathcal{S}(p,\varepsilon)$ that contains $p$ in its boundary at infinity.
\end{definition}

\begin{figure}[htbp!]
\begin{center}
\includegraphics[width=1\linewidth]{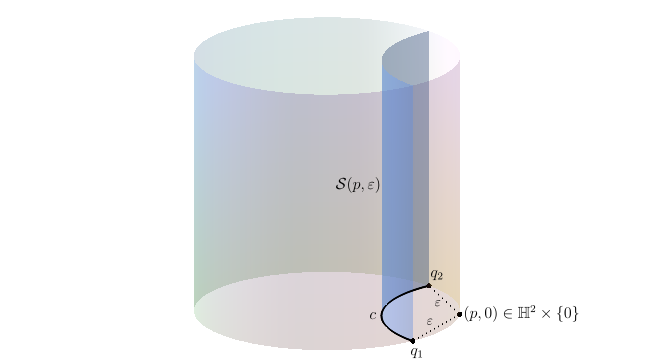}
\end{center}
\caption{$\mathcal{S}(p,\varepsilon)$, as in Definition \ref{cylinder}}\label{fig:B}
\end{figure}

Given $(p,t) \in \partial_{\infty} \Hi \times \mathbb{R}$, we denote by $l_p$ the vertical line $\{p\}\times \mathbb{R}\subset \partial_{\infty} \Hi \times \mathbb{R} $.

\begin{theorem}\label{finiteboundary}
Let $M$ be  a properly immersed translator in $\HitR$ with (possibly empty) compact boundary. Suppose that its vertical asymptotic boundary, $\partial_\infty^v M$, is a proper family of disjoint immersed continuous curves and let $\gamma$ be a connected component of $\partial_\infty^v M$ so that $M\cup \gamma $ is a continuous surface with boundary. Then $\gamma=l_p$ or $\gamma=\{p\}\times [T,\infty)$ for some $T\in \mathbb{R}$.
\end{theorem}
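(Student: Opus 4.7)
The plan is to apply the tangency principle (Theorem~\ref{tangency}) with carefully chosen members of the family $\Omega$ of hyperbolic translating catenoids $\Sigmaw$ from Section~\ref{symmetric-translators} as barriers. The argument splits naturally into two assertions, each established by contradiction.

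\emph{Assertion 1: $\gamma$ is contained in a single vertical line $\{p\}\times\mathbb R$ for some $p\in\partial_\infty\Hi$.}  Suppose instead that the projection $\pi_{\Hi}(\gamma)\subset\partial_\infty\Hi$ is not a single point; since $\gamma$ is a connected continuous curve, $\pi_{\Hi}(\gamma)$ is then a nondegenerate arc. Fix $(p_0,t_0)\in\gamma$ with $p_0$ interior to this arc, and choose $q_1,q_2\in\partial_\infty\Hi$ on either side of $p_0$, sufficiently close that the short arc between them meets $\pi_{\Hi}(\gamma)$ only near $p_0$. Let $\mathcal E_0$ be the geodesic with endpoints $q_1,q_2$, and let $\Sigmaw\in\Omega$ have axis $\mathcal E_0$, parameter $r_0>0$ small, and minimum height $s_0$ initially so large that $\Sigmaw$ is disjoint from $M$. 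The plan is then to slide $\Sigmaw$ downward by decreasing $s_0$. Because $M\cup\gamma$ is a continuous surface with boundary, $M$ populates a full one-sided neighborhood of $\gamma$ inside the wedge $\widetilde{\mathcal S}(p_0,\varepsilon)$, so a first contact between $\Sigmaw$ and $M$ must eventually occur. By the choice of $q_1,q_2$, the vertical asymptotic boundary $\{q_1,q_2\}\times[s_0,\infty)$ of $\Sigmaw$ remains disjoint from $\partial_\infty M$ throughout; hence the first contact is interior and tangential. Theorem~\ref{tangency} then forces $M=\Sigmaw$, contradicting the assumed horizontal extent of $\gamma$.

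\emph{Assertion 2: Either $\gamma = l_p$ or $\gamma = \{p\}\times[T,\infty)$.}  Granted Assertion 1, I would suppose toward contradiction that $\gamma\subset\{p\}\times\mathbb R$ has a finite upper endpoint $(p,T_{\max})$, and repeat the barrier construction: pick $q_1,q_2$ close to $p$ on either side (so that $p$ lies in the short arc), let $\mathcal E_0$ be the corresponding geodesic, and take $\Sigmaw\in\Omega$ with axis $\mathcal E_0$ and minimum height $s_0$ just above $T_{\max}$. The vertical asymptotic boundary $\{q_1,q_2\}\times[s_0,\infty)$ is then disjoint from $\gamma$. Sliding $\Sigmaw$ downward, and using that $M$ accumulates on $(p,T_{\max})$ from below but cannot continue along $\{p\}\times\mathbb R$ above that height, a first interior tangential contact with $M$ must appear, contradicting Theorem~\ref{tangency} once more.

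The main obstacle is making the sliding argument rigorous: one must verify that the first contact really occurs in the interior of both surfaces, and not at $\partial M$, at an asymptotic point, or along $\partial\Sigmaw$. This requires tracking the positions of $\partial_\infty\Sigmaw$, the compact set $\partial M$, and $\gamma$ throughout the sliding, and exploiting the hypothesis that $M\cup\gamma$ is a continuous surface with boundary to guarantee that the barrier actually bites on an interior piece of $M$. The fact that $\gamma$ is only immersed, not embedded, means the analysis must be carried out locally at a chosen parametrization near $(p_0,t_0)$, and the parameter $r_0$ must be chosen small enough that $\Sigmaw$ fits inside the wedge $\widetilde{\mathcal S}(p_0,\varepsilon)$ and can sweep the relevant region without meeting unintended parts of $\partial_\infty M$.
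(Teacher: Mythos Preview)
Your two-assertion decomposition matches the paper, and for Assertion~2 the paper does indeed use members of $\Omega$ as barriers. However, the proposal has a genuine gap that you yourself flag but do not resolve: the \emph{localization step} is missing. Before any sliding barrier can be used, one must isolate a connected piece of $M$ near $\gamma$ and trap it in a horizontal slab $\Hi\times(t-\delta,t+\delta)$ (this is the content of the paper's Claims~\ref{openslab} and~\ref{openslab2}). Without this, your opening move ``take $s_0$ initially so large that $\Sigmaw$ is disjoint from $M$'' is not justified: $M$ is a proper translator with no assumed height bound, so $M$ may meet $\Hi\times[s_0,\infty)$ for every $s_0$. The localization is what converts the global surface $M$ into a compact-boundary piece with controlled finite and asymptotic boundary, against which a barrier family can actually be run.

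There is a second, sharper problem in Assertion~1 specific to your choice of barrier. You pick $p_0$ \emph{interior} to $\pi_{\Hi}(\gamma)$ and $q_1,q_2$ close to $p_0$ on either side; then necessarily $q_1,q_2\in\pi_{\Hi}(\gamma)$, so $\gamma$ crosses $l_{q_1}$ and $l_{q_2}$ at heights near $t_0$. The vertical asymptotic boundary of your $\Sigmaw$ is exactly $\{q_1,q_2\}\times[s_0,\infty)$, and as you slide down these rays will touch $\gamma$ at roughly the same moment any interior contact could occur, so the tangency principle cannot be invoked. The paper avoids this by using \emph{rotational bowls} (which have no vertical asymptotic boundary at all) for Assertion~1, together with a case split and a ``stretching'' argument via hyperbolic translations that pushes the finite boundary of the localized piece arbitrarily far from the barrier. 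Only in Assertion~2, where $\gamma\subset l_p$ is already vertical and one may choose $q_1,q_2\notin\pi_{\Hi}(\gamma)$, does the paper deploy $\Sigmaw\in\Omega$; and even there the motion is an \emph{upward} translation to find a last contact with the localized piece, not a downward slide from infinity.
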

\begin{proof}
If the compact boundary of $M$ is non-empty, fix $\nu>0$ such that the infimum of the Euclidean distance between $\partial M$ and the boundary cylinder is greater than $\nu$. We first prove that $\gamma$ must be contained in a vertical line.

\begin{lemma}\label{vertical}
There exists $p\in \partial_\infty \mathbb{H}^2$ such that $\gamma\subset l_p$.
\end{lemma}
\begin{proof}
Arguing by contradiction, let's 
assume that $\gamma$ is not contained in a vertical line and let $(p,t)\in \gamma$ be a point that is not contained in a vertical segment, that is there exists $\rho>0$ such that $\{(p,t)\}=\gamma \cap [\{p\}\times (t-\rho, t+\rho)]$. Let $\varepsilon\in(0,\frac \nu 2)$ be a small real number to be chosen later. Note that $\varepsilon<\frac \nu 2$ gives that $\partial M$, the finite boundary,  is NOT contained in $\widetilde{\mathcal{S}}(p,\varepsilon)$. Let $M_\varepsilon$ be the  connected component of $M\cap \widetilde{\mathcal{S}}(p,\varepsilon)$ that contains $(p,t)$ in its asymptotic boundary and let $\gamma_\varepsilon\subset \gamma$ be the connected component of the asymptotic boundary of $M_\varepsilon$ containing $(p,t)$.  Note that since  $\partial_\infty^v M$ is a proper family of disjoint curves, there exist  $\varepsilon_1>0$ and $\delta\in(0,\rho)$ such that for any $\varepsilon\leq \varepsilon_1$, $\gamma_\varepsilon$  is the only component of $\partial_\infty^v M$ that intersects the closed slab $\partial_\infty \mathbb{H}^2\times [t-\delta,t+\delta]$. 

\begin{claim}\label{openslab}
There exists $0<\overline{\varepsilon}< \varepsilon_1$ such that
\[M_{\overline{\varepsilon}}\subset \mathbb{H}^2\times (t-\delta,t+\delta).\]  In particular, $\gamma_{\overline{\varepsilon}}$ is the asymptotic boundary of $M_{\overline{\varepsilon}}$ and
\[\gamma_{\overline{\varepsilon}}\subset \widetilde{\mathcal{S}}(p,\overline{\varepsilon})\cap [\mathbb{H}^2\times (t-\delta,t+\delta)].\]
See Figure~\ref{fig:C1}.
\end{claim}

\begin{figure}[htbp!]
\begin{center}
\includegraphics[width=0.6\linewidth]{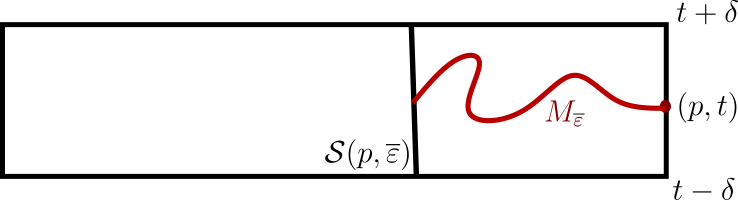}
\end{center}
\caption{A vertical section of $\mathbb H^2\times(t-\delta,t+\delta)$ describing Claim \ref{openslab}.}
\label{fig:C1}
\end{figure}

\begin{proof}
Arguing by contradiction, assume that for any $\varepsilon< \varepsilon_1$,
$M_\varepsilon$ is not contained in $\mathbb{H}^2\times (t-\delta,t+\delta)$. Without loss of generality, let's assume that for any $\varepsilon< \varepsilon_1$ we have that \[M_\varepsilon\cap [\mathbb{H}^2\times \{t+\delta\}]\neq \emptyset.\] 
Since $M_\varepsilon\cap [\mathbb{H}^2\times \{t+\delta\}]\subset \widetilde{\mathcal{S}}(p,\varepsilon)$, by definition, this implies that $(p, t+\delta)$ is a point in the asymptotic boundary of $M$. In particular, since for any $\varepsilon< \varepsilon_1$, $M_\varepsilon\subset M_{\varepsilon_1}$, the point $(p, t)$ is a point in the asymptotic boundary of $M_{\varepsilon_1}$. By our choice of $\varepsilon_1>0$ and $\delta\in(0,\rho)$, $\gamma_{\varepsilon_1}$ is the only component of $\partial_\infty^v M$ that intersects the closed slab $\partial_\infty \mathbb{H}^2\times [t-\delta,t+\delta]$. Then $(p, t+\delta)\in \gamma_{\varepsilon_1}$. This leads to a contradiction because $\delta<\rho$  and $(p,t)$ is the only point in the intersection $\gamma_{\varepsilon_1} \cap [\{p\}\times (t-\rho, t+\rho)]$. This contradiction finishes the proof of the claim.
\end{proof}

With $\delta$ and $\overline{\varepsilon}$ given by the previous discussion and Claim~\ref{openslab}, there are two cases to consider and rule out (see Figure \ref{fig:C2}):
\begin{enumerate}
    \item $\gamma_{\overline{\varepsilon}}$ is  on one side of $l_p$.
    \item $\gamma_{\overline{\varepsilon}}$ is NOT  on one side of $l_p$.
\end{enumerate}

\begin{figure}[htbp!]
\begin{center}
\includegraphics[width=0.6\linewidth]{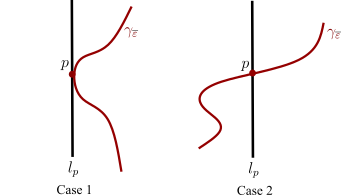}
\end{center}
\caption{On the left, Case 1, on the right Case 2.}
\label{fig:C2}
\end{figure}

Without loss of generality, after applying a vertical translation, let's assume that $(p,t)=(p,0)$.
We first rule out Case 1.

\begin{claim}\label{no-case1}
    Case 1. does not hold.
\end{claim}

\begin{proof}
  In this case, the vertical projection of $\gamma_{\overline{\varepsilon}}$ on $\partial_{\infty} \Hi \times \left\{ 0 \right\}$ is
an arc $\beta$ with $p$ as one of the two end points.   See Figure~\ref{fig:D}. Let $q_\beta$ denote the other endpoint of $\beta$, let $q_0\in \partial_{\infty} \Hi \times \left\{ 0 \right\}$ be the point so that $p$ is the midpoint of the arc in $\partial_{\infty} \Hi \times \left\{ 0 \right\}$ whose endpoints are $q_\beta$ and $q_0$, and let $\beta_0$ be the (small) open arc whose endpoints are $p$ and $q_0$.

\begin{figure}[htbp!]
\begin{center}
\begin{minipage}[b]{0.5\textwidth}
    \centering
    \includegraphics[width=\textwidth]{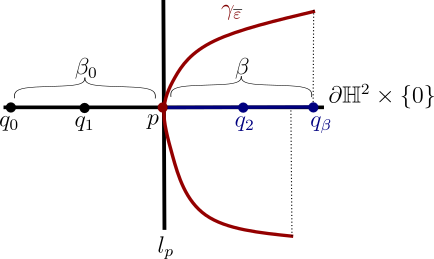}
  \end{minipage}
  \hfill
  \begin{minipage}[b]{0.45\textwidth}
    \centering
    \includegraphics[width=\textwidth]{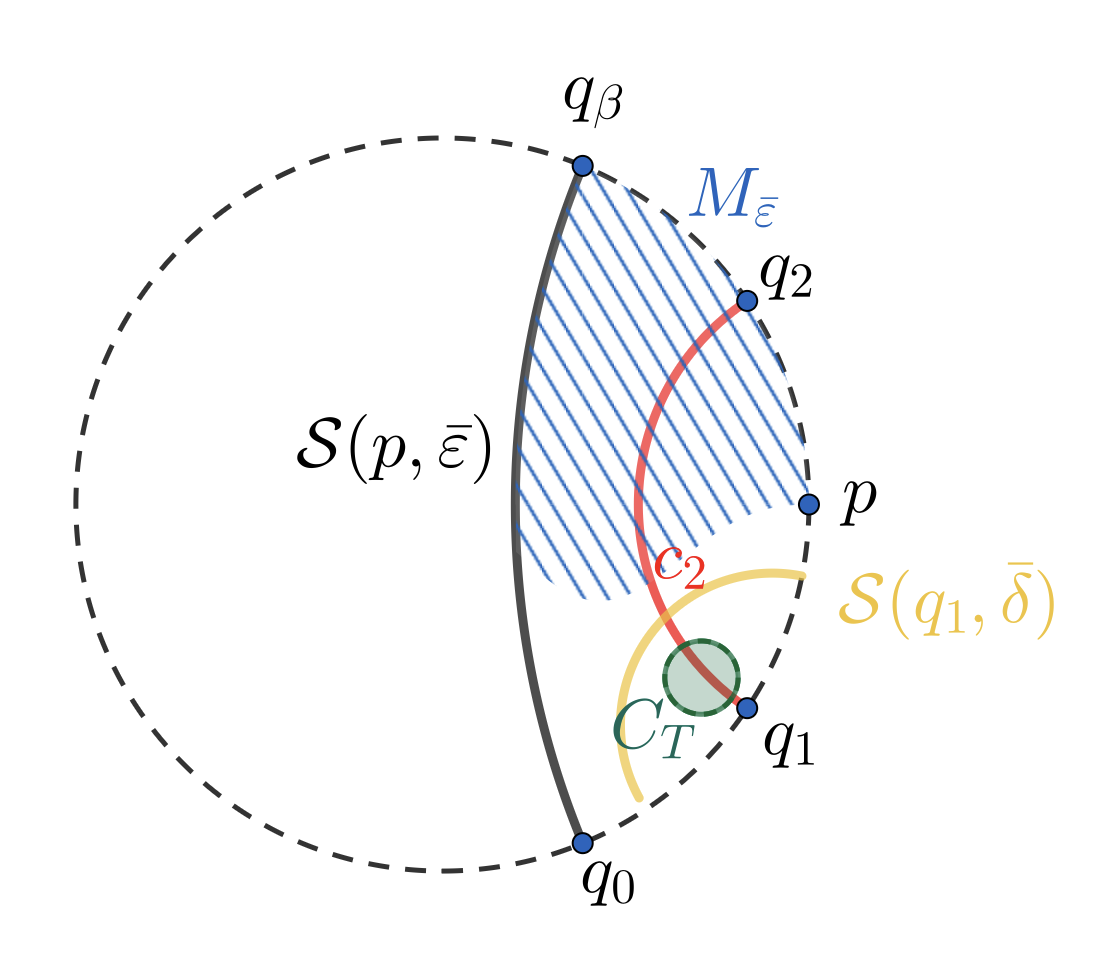}
  \end{minipage}
\end{center}
\caption{The constructions in the proof of Claim \ref{no-case1}. On the left, front view; on the right, top view.}
\label{fig:D}
\end{figure}

Using these notations, $\gamma_{\overline{\varepsilon}}$ is on the side of $l_p$ that does not ``contain'' $\beta_0$. For any $q\in\beta_0$ then $l_q\cap \gamma_{\overline{\varepsilon}}=\emptyset$. 
Let $q_1$ be the midpoint of $\beta_0$.  Then, since $l_{q_1}\cap \gamma_{\overline{\varepsilon}}=\emptyset$, arguing as in the proof of Claim~\ref{openslab}, there exists $\overline{\delta}<\frac{\overline{\varepsilon}}{10}$ such that   \[
\widetilde{\mathcal{S}}(q_1,\overline{\delta})\cap M_{\overline{\varepsilon}}=\emptyset.\]

Let $C \subset \HitR$ be the rotational bowl whose lowest point is $(0,-2\delta)$, see Section~\ref{symmetric-translators}. Then, we can apply a hyperbolic translation $T$ such that $C_T:=T(C)\cap [\Hi \times [-2\delta, 2\delta]]$ is contained in $
\widetilde{\mathcal{S}}(q_1,\overline{\delta})$. In particular, $C_T\cap M_{\overline{\varepsilon}}=\emptyset$ and its finite boundary is contained in $\Hi \times \{2\delta\}$, see Figure \ref{fig:pipoli}.

Let $c_2 \subset \Hi \times \left\{ 0 \right\}$ be the complete geodesic with asymptotic boundary points $q_1$ and $q_2$, where $q_2$ is the midpoint of $\beta$. Consider hyperbolic translations along $c_2$. Recall that by Claim~\ref{openslab}, 
\[\partial M_{\overline{\varepsilon}}\subset \mathcal{S}(p,\overline{\varepsilon})\cap [\mathbb{H}^2\times [-\delta,\delta]]\]
and observe that all translated copies of $C_T$ are contained in $\widetilde{\mathcal{S}}(p,\frac 34 \overline{\varepsilon})$ and have their finite boundary contained in $\Hi \times \{2\delta\}$. This implies that the boundary of any translated copy of $C_T$ has no intersection with $M_{\overline{\varepsilon}}$ and the boundary of $M_{\overline{\varepsilon}} $ has no intersection with any translated copy of $C_T$. Therefore, since $C_T\cap M_{\overline{\varepsilon}}=\emptyset$, some translated copy of $C_T$, $\widetilde{C_T}$, must obtain its first point of contact with $M_{\overline{\varepsilon}}$ at an interior point of $M_{\overline{\varepsilon}}$ and $\widetilde{C_T}$. This contradicts the tangency principle, Theorem~\ref{tangency}, and concludes the proof of the claim.
\end{proof}

\begin{figure}[htbp!]
\begin{center}
\includegraphics[width=0.5\linewidth]{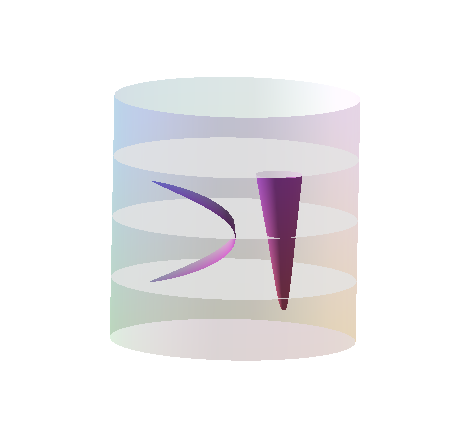}
\caption{Rotational bowl as barrier}
\end{center}
\label{fig:pipoli}
\end{figure}

We next rule out Case 2.

\begin{claim}\label{no-case2}
    Case 2. does not hold.
\end{claim}
\begin{proof}
In this case, the vertical projection of $\gamma_{\overline{\varepsilon}}$ on $\partial_{\infty} \Hi \times \left\{ 0 \right\}$ is
an arc $\beta$ with $p$ as one of its interior points. See Figure~\ref{fig:E}.

\begin{figure}[htbp!]
\begin{center}
\includegraphics[width=0.6\linewidth]{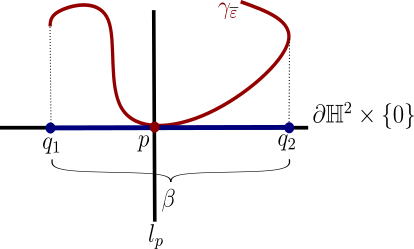}
\end{center}
\caption{The construction in the proof of Claim \ref{no-case2}.}
\label{fig:E}
\end{figure}

Let $q_1$ and $q_2$ denote the endpoints of $\beta$, let $\alpha>0$ be the distance between $p$ and $q_1$ and assume that $q_1$ is closer to $p$ than $q_2$. Let $M_{\alpha}\subset M_{\overline{\varepsilon}}$ denote the connected component of $M_{\overline{\varepsilon}}\cap \widetilde{\mathcal{S}}(p,\alpha)$ that contains $(p,0)$ in its asymptotic boundary. Recall that $M_\alpha\subset \Hi \times (-\delta,\delta)$ and that $\partial M_\alpha\subset \mathcal{S}(p,\alpha)$.

By definition, there exists a sequence of points $(p_n,t_n)_{n\in\mathbb N}\subset M_{\alpha}$, converging to $(p,0)$. Let $\omega_n$ be the complete geodesic that contains the points $(p_n, t_n)$ and $(\vec0,t_n)$ in $\Hi\times\{t_n\}$ (where $\vec0$ denotes the origin of $\Hi$) and consider the hyperbolic translation, $T_n$ along $\omega_n$ that takes $(p_n, t_n)$ to $(\vec0,t_n)$. See Figure~\ref{fig:F}.

\begin{figure}[htbp!]
\begin{minipage}{0.45\linewidth}
    \begin{center}
\includegraphics[width=0.8\linewidth]{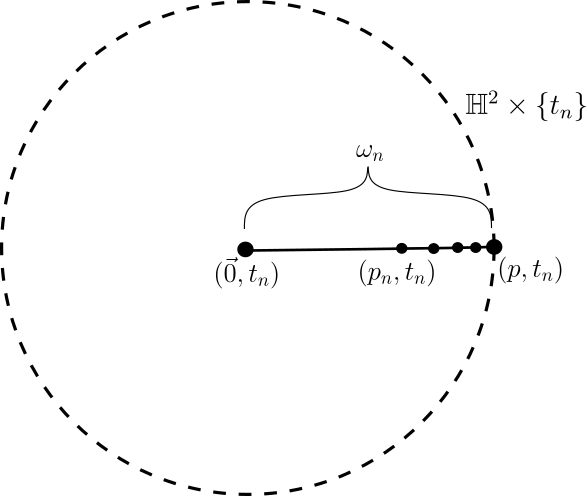}
\end{center}
\end{minipage}
\begin{minipage}{0.45\linewidth}
\begin{center} \vspace{0.1cm}
\includegraphics[width=1.5\linewidth]{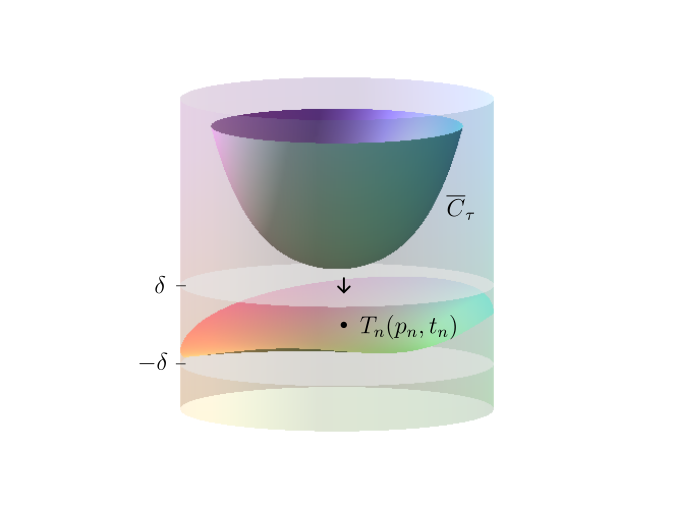}
\end{center}
\end{minipage}
\caption{The stretching argument and the rotational bowl}\label{fig:F}
\end{figure}

Note that $(\vec0,t_n)\in T_n(M_\alpha)\subset \Hi \times (-\delta,\delta)$ and that 
given $\mu>0$ there exists $n_\mu\in\mathbb{N}$ such that for any $n\geq n_\mu$, the vertical projection of $\partial T_n(M_\alpha)$ in $\Hi \times \left\{ 0 \right\}$ does not intersect the disk centered at the origin of radius $1-\mu$.

Let $C \subset \HitR$ be the rotational bowl, see Section~\ref{symmetric-translators}, whose lowest point is $(0,-2\delta)$, let 
\[\overline C := C\cap [\Hi \times [-2\delta,2\delta]]\]
and let $\overline C_\tau$ be $\overline C$  vertically translated upward by $\tau$ so that 
$\overline C_0=\overline C$. For any $\tau\in [0,4\delta]$, the vertical projection of $\partial \overline C_\tau$ in $\Hi \times \left\{ 0 \right\}$ is a circle of radius $1-r$, $r\in\mathbb{R}$. Fix $\mu<\frac r2$ and note that with this choice of $\mu$, the boundary of $T_{n_\mu}(M_\alpha)$ is disjoint from any vertical translated $\overline C_\tau$. Note also that for any $\tau\in [0,4\delta]$, the boundary of $\overline C_\tau$ is disjoint from $T_{n_\mu}(M_\alpha)$.

Therefore, since $\overline C_0 \cap T_{n_\mu}(M_\alpha)\neq\emptyset$ but $\overline C_{4\delta} \cap T_{n_\mu}(M_\alpha)=\emptyset$, then some translated copy  $\overline C_{\overline \tau} $,  $\overline{\tau}\in (0,4\delta)$ must obtain its last point of contact with $T_{n_\mu}(M_\alpha)$ at an interior point of $T_{n_\mu}(M_\alpha)$ and $\overline C_{\overline \tau}$. This contradicts the tangency principle, Theorem~\ref{tangency}, and concludes the proof of the claim.
\end{proof}

The fact that neither Case 1. nor Case 2. can hold is clearly a contradiction. This contradiction completes the proof of Lemma~\ref{vertical}. 
\end{proof}

In light of Lemma~\ref{vertical} we find that there exists $p\in \partial_\infty \mathbb{H}^2$ such that $\gamma\subset l_p$ and we need to prove that $\gamma=l_p$ or $\{p\}\times [T,\infty)$ with $T\in \mathbb{R}$. Arguing by contradiction, assume that 
\[
\tau:=\sup_{(p,t)\in\gamma}{t}<\infty.
\]
After applying a vertical translation, we can assume that $\tau=0$.  
Given $\varepsilon>0$ let $\widetilde{M}_\varepsilon$ be a connected component of $M\cap \widetilde{\mathcal{S}}(p,\varepsilon)\cap [\mathbb{H}^2\times [-10,+\infty)]$ that contains $(p,0)$ in its asymptotic boundary.

\begin{claim}\label{openslab2}
There exists $0<\overline{\varepsilon}<\frac \nu2$ such that 
\[\widetilde{M}_{\overline{\varepsilon}}\subset \mathbb{H}^2\times [-10,1)\] and $\gamma\cap[\mathbb{H}^2\times [-10,1)]$ is the only component of the asymptotic boundary of $\widetilde{M}_{\overline{\varepsilon}}$. In particular,
\[\partial \widetilde{M}_{ \overline{\varepsilon}}\subset [\mathcal{S}(p,\overline{\varepsilon})\cap \mathbb{H}^2\times [-10,1)]\cup [\mathbb{H}^2\times \{-10\}].\]
\end{claim}
\begin{proof}
Note that \[
\gamma\cap [\{p\}\times (0, 1)]=\emptyset.
\]
Using this observation, the proof follows more or less the same arguments as in the proof of Claim~\ref{openslab} and uses the fact that $\partial^v_\infty M$ is a proper family of disjoint immersed continuous curves. Note that in this case we are only saying that it is contained in a ``half-open'' slab.
\end{proof}
Fix $\overline{\varepsilon}>0$ as given by Claim~\ref{openslab2}. For any $\varepsilon>0$, let $q^\varepsilon_1$ and $q^\varepsilon_2$ be the points in $\partial_\infty \mathbb{H}^2\times\{0\}$ at distance $\varepsilon$ from $p$.  Fix any $\Sigmaw\in\Omega$, see Section~\ref{symmetric-translators} for the definitions of $\Sigmaw$ and $\Omega$, and let $\Sigmaw_\varepsilon$ be an isometric copy of $\Sigmaw$ with asymptotic boundary $\{q^\varepsilon_1\}\times [-2,\infty)\cup \{q^\varepsilon_2\}\times [-2,\infty)$. Note that as $\varepsilon$ goes to zero, $\Sigmaw_\varepsilon$ converges continuously to $\{p\}\times [-2,\infty)$. In particular, if $\varepsilon\leq \frac{\overline{\varepsilon}}2$, then $\Sigmaw_\varepsilon \cap \partial \widetilde{M}_{\overline{\varepsilon}}=\emptyset$.

Recall that $\Sigmaw_\varepsilon$ separates $\mathbb{H}^2 \times \mathbb{R}$ into two components. Let $V^\varepsilon_{in}$ denote the component that is bounded from below, and let $V^\varepsilon_{out}$ denote the other component. 

 By definition, there exists a sequence of points $(p_n,t_n)_{n\in\mathbb N}\subset\widetilde{M}_{\overline{\varepsilon}}$ that converge to the point $(p,0)$. After passing to a subsequence, we can assume that $t_n>-1$. Moreover, by taking $\overline{n}$ sufficiently large we can assume that the following holds: the point $(p_{\overline{n}}, t_{\overline{n}})$ is contained in $ V^{\frac{\overline{\varepsilon}}2}_{out}$ and if we let $c_{\overline{n}}$ be the geodesic arc whose endpoints are $(0,t_{\overline{n}})$ and $(p_{\overline{n}}, t_{\overline{n}})$, the intersection of $c_{\overline{n}}$ with $\Sigmaw_{\frac{\overline{\varepsilon}}2}$ consist of two points. See Figure~\ref{fig:I}.

\begin{figure}[htbp!]
\begin{center}
\includegraphics[width=1\linewidth]{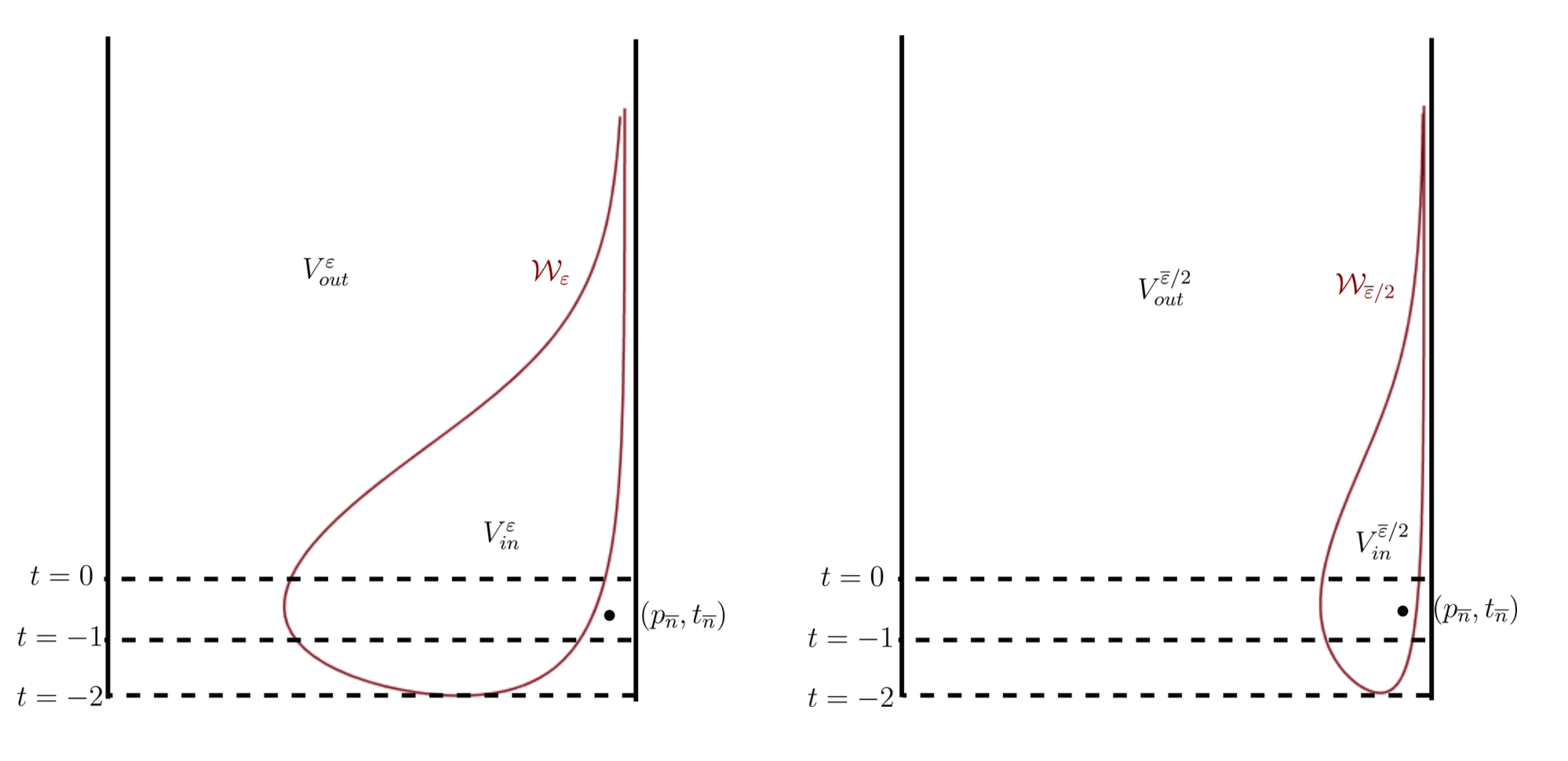}
\end{center}
\caption{The points $(p_{\overline{n}}, t_{\overline{n}})$ are ``inside'' $\Sigmaw_{\frac{\overline{\varepsilon}}2}$ }
\label{fig:I}
\end{figure}

By continuity, there exists $\varepsilon_1<\frac{\overline{\varepsilon}}2$ such that $(p_{\overline{n}}, t_{\overline{n}})\in V^{\varepsilon_1}_{in}$. However since $(p_n,t_n)$ converges to $(p,0)\notin V^{\varepsilon_1}_{in}$, then there are points $(p_n,t_n)\notin V^{\varepsilon_1}_{in}$ for $n$ large, giving that
 \[
 \Sigmaw_{\varepsilon_1}\cap \widetilde{M}_{\overline{\varepsilon}}\neq \emptyset.
 \]

Let $\Sigmaw_{\varepsilon_1}^\tau$ be $\Sigmaw_{\varepsilon_1}$ vertically translated upward by $\tau\in [0, 5]$ so that
$\Sigmaw_{\varepsilon_1}^0=\Sigmaw_{\varepsilon_1}$. By construction, for any $\tau\in [0, 5]$, 
\[
\Sigmaw_{\varepsilon_1}^\tau\cap \partial \widetilde{M}_{\overline{\varepsilon}}=\emptyset.
\]
This observation, together with the fact that $\Sigmaw_{\varepsilon_1}^0\cap \partial \widetilde{M}_{\overline{\varepsilon}}\neq\emptyset$ but $\Sigmaw_{\varepsilon_1}^5\cap \partial \widetilde{M}_{\overline{\varepsilon}}=\emptyset$ implies that some translated copy  $\Sigmaw_{\varepsilon_1}^{\overline \tau} $,  $\overline{\tau}\in (0,5)$, must obtain its last point of contact with $\widetilde{M}_{\overline{\varepsilon}}$ at an interior point of $\widetilde{M}_{\overline{\varepsilon}}$. This contradicts the tangency principle, Theorem~\ref{tangency}, and concludes the proof of Theorem~\ref{finiteboundary}.
\end{proof}

\section{The horizontal asymptotic boundary. }\label{nonfinite}
In this section we study the horizontal asymptotic boundary and prove Theorem~\ref{main2}. We begin by proving the following lemma.

\begin{lemma}\label{nonfinitelemma}
    Let $M$ be a properly immersed translator in $\HitR$ and let $\Gamma_\pm$ be a geodesic in $\Hi \times \left\{\pm\infty \right\}$. Assume that there exists $N\in\mathbb N$ such that 
    \[\partial M\subset [\Gamma_\pm\times\mathbb R]\cup [\Hi\times [-N,N]]\text{\, and\, }
     \partial_\infty^vM\subset \partial_\infty[[\Gamma_\pm\times\mathbb R]\cup [\Hi\times [-N,N]].\]
    Let $\mathcal{H}^\pm_1$ and $\mathcal{H}^\pm_2$ be the  half-planes determined by $\Gamma_\pm$. If $\partial^{\pm}_{\infty} M \cap \mathcal{H}^\pm_i \neq \varnothing$, $i=1,2$, then $\partial^{\pm}_{\infty} M \cap \mathcal{H}^\pm_i $, $i=1,2$, is not bounded.
\end{lemma}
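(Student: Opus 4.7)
I plan to argue by contradiction, in the spirit of the barrier arguments in the proof of Theorem~\ref{finiteboundary}; I focus on the $+$ case, as the $-$ case is analogous. Suppose $\partial_\infty^+ M \cap \mathcal{H}_1^+$ is non-empty and bounded, so that its closure $K$ is a compact subset of $\mathcal{H}_1^+$ (identifying $\Hi \times \{+\infty\}$ with $\Hi$). First, I would exploit the boundedness to isolate a \emph{shielded half-plane}: choose a geodesic $\Gamma' \subset \mathcal{H}_1$ ultraparallel to $\Gamma_+$, at positive hyperbolic distance from $K$, with $K$ in the region between $\Gamma_+$ and $\Gamma'$. Let $D$ denote the half-plane of $\Hi$ bounded by $\Gamma'$ and not containing $\Gamma_+$; then $D \subset \mathcal{H}_1$ and $D \cap K = \emptyset$. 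Combining this with the hypotheses on $\partial M$ and $\partial_\infty^v M$, and using that the asymptotic endpoints $p_1',p_2'$ of $\Gamma'$ lie in $\partial_\infty \mathcal{H}_1$ and are therefore disjoint from $\partial_\infty\Gamma_+$, one obtains
\[
\partial M\cap (D\times (N,\infty))=\emptyset,\quad \partial_\infty^+ M\cap \overline{D}=\emptyset,\quad \partial_\infty^v M\cap \bigl(\overline{\partial_\infty D}\times (N,\infty)\bigr)=\emptyset.
\]

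Next, I would introduce a barrier $\Sigmaw\in\Omega$ with small $r_0>0$ and axis $\mathcal{E}_0=\Gamma'$, so that its domain $\{\mathcal{E}_r : r > r_0\}$ is a sub-half-plane of $D$, and with a free lowest-height parameter $t_0 > N$. Recall from Section~\ref{symmetric-translators} that such a $\Sigmaw$ has no finite boundary and its asymptotic boundary consists of the two vertical half-lines $\{p_1',p_2'\} \times [t_0,\infty)$ together with a corner arc in $\partial_\infty D \times \{+\infty\}$, with no horizontal component; by the previous paragraph this asymptotic boundary is disjoint from $\partial_\infty M$ at all finite heights above $N$. The hypothesis that there exists a sequence $(p_n,t_n) \in M$ with $p_n \to x_0 \in K$ and $t_n \to \infty$ forces $M$ to have points of arbitrarily large height in every neighborhood of $x_0$, while properness of $M$ together with $\partial_\infty^+ M \cap \overline{D} = \emptyset$ implies that the heights of $M$ over any compact subset of $\overline{D}$ are bounded above. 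These two facts set up the geometric conflict: by applying an appropriate combination of horizontal (perpendicular to $\Gamma'$) and vertical translations of $\Sigmaw$, one can extract a last point of contact between a translated copy of $\Sigmaw$ and $M$ at an interior point of both surfaces, in analogy with the arguments carried out in Claims~\ref{no-case1} and~\ref{no-case2}.

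Such an interior tangential contact contradicts the tangency principle (Theorem~\ref{tangency}), since $M$ and the translated $\Sigmaw$ cannot globally coincide (their asymptotic boundaries differ: e.g., $(x_0,+\infty) \in \partial_\infty^+ M$, while no translate of $\Sigmaw$ has any horizontal asymptotic boundary). The hard part will be to rigorously arrange the sliding so that the inevitable contact is indeed interior and tangential rather than at a boundary or at asymptotic infinity: one must verify that throughout the translations, the finite and asymptotic boundaries of the moving copies of $\Sigmaw$ and of $M$ remain disjoint, while the bulk of the surfaces is forced to meet. This relies crucially on both the boundary and vertical asymptotic boundary restrictions on $M$ imposed in the statement, together with the boundedness assumption on $\partial_\infty^+ M \cap \mathcal{H}_1^+$ that was made for contradiction.
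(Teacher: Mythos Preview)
Your overall strategy is the paper's: argue by contradiction, take a barrier $\mathcal W\in\Omega$, slide it to force an interior tangency, and invoke Theorem~\ref{tangency}. The gap is in how you arrange the starting configuration. Your observation that ``the heights of $M$ over any compact subset of $\overline D$ are bounded above'' is correct but does not deliver an initially disjoint barrier: $\mathcal W$ projects onto all of the non-compact region $D$, and the hypotheses of the lemma say nothing about the corner asymptotic boundary $\partial_\infty^{c,+}M$. Hence no choice of $t_0$ guarantees $\mathcal W\cap M=\emptyset$, and the proposed ``combination of horizontal and vertical translations of $\mathcal W$'' has no clean point from which to begin the sweep.

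The paper's fix is to translate $M$ itself, not the barrier, before anything else. In the $+$ case one sets $M_{\overline\tau}^{\sigma}:= (M-\overline\tau\,\partial_h)\cap[\Hi\times[-\sigma,+\infty))$ for a large $\overline\tau$; this isolates the part of $M$ at original heights $\ge\overline\tau-\sigma$, and the boundedness assumption on $\Delta$ then confines the projection of $M_{\overline\tau}^{\sigma}$ to the near side of an equidistant curve $\mathcal E_{\overline\nu+\rho/10}$ (this is the paper's first bullet and is what replaces your compact-subset argument). The barrier is placed with axis a geodesic $\overline\Gamma$ chosen to \emph{intersect} $\Delta$ and tangent to $\mathcal E_{\overline\nu+\rho/10}\times\mathbb R$, so that a short \emph{purely horizontal} hyperbolic translation along $\overline\Gamma^\perp$ toward a point $p\in\Delta\cap\mathcal E_{\overline\nu}$ produces the first interior contact; no vertical sliding of $\mathcal W$ is used. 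Finally, the $+$ and $-$ cases are not symmetric: in the $-$ case the paper translates $M$ upward and needs an additional control on the upper extent of $M_{\overline\tau}^{\sigma}$ (since $\mathcal W$ is bounded below but unbounded above), so ``analogous'' undersells the work required there.
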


\begin{proof} We need to tackle the two cases, $+\infty$ and $-\infty$, separately. However, the following definitions are necessary for both cases.  Arguing by contradiction, assume that $\Delta:=\partial^+_{\infty} M \cap \mathcal {H}^+_1 \neq \emptyset$ (or $\Delta:=\partial^-_{\infty} M \cap \mathcal {H}^-_1 \neq \emptyset$) is bounded. See Figure~\ref{fig:Ab}.

\begin{figure}[htbp!]
\begin{minipage}[]{0.54\linewidth}
\begin{center}
\includegraphics[width=1\linewidth]{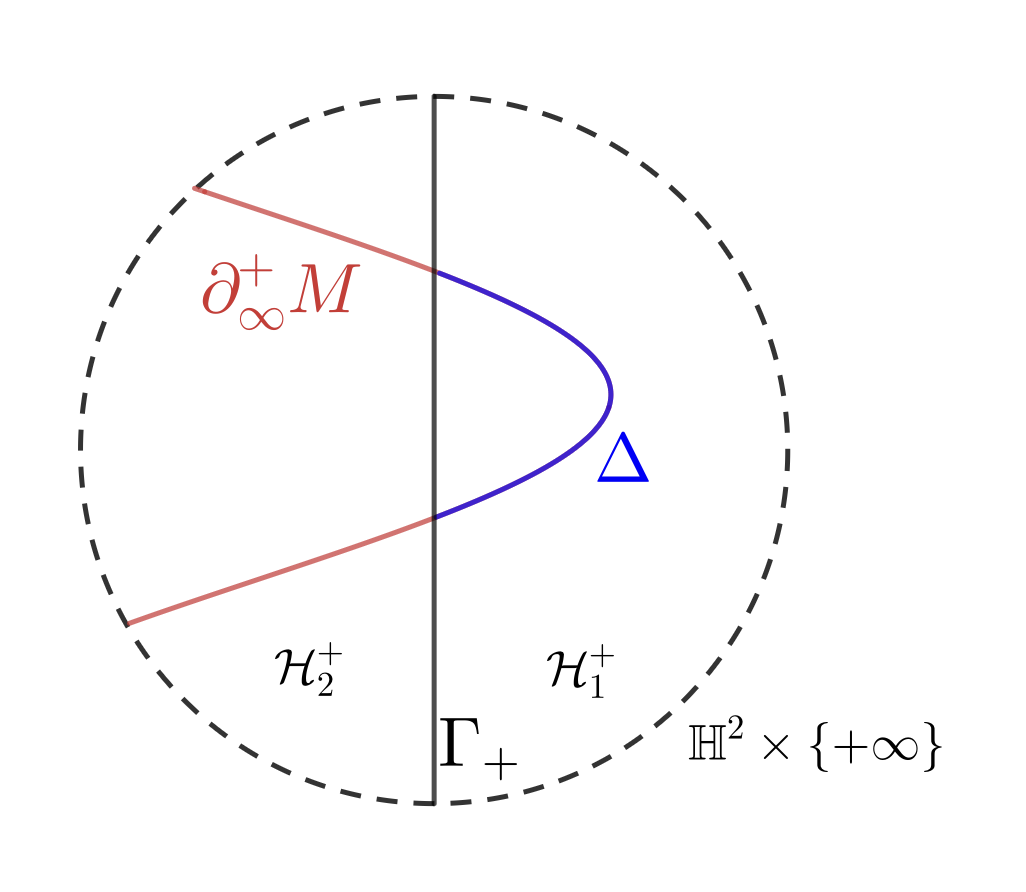}
\end{center}
\end{minipage}
\begin{minipage}[]{0.48\linewidth}
\begin{center}\vspace{0.1cm}
\includegraphics[width=1\linewidth]{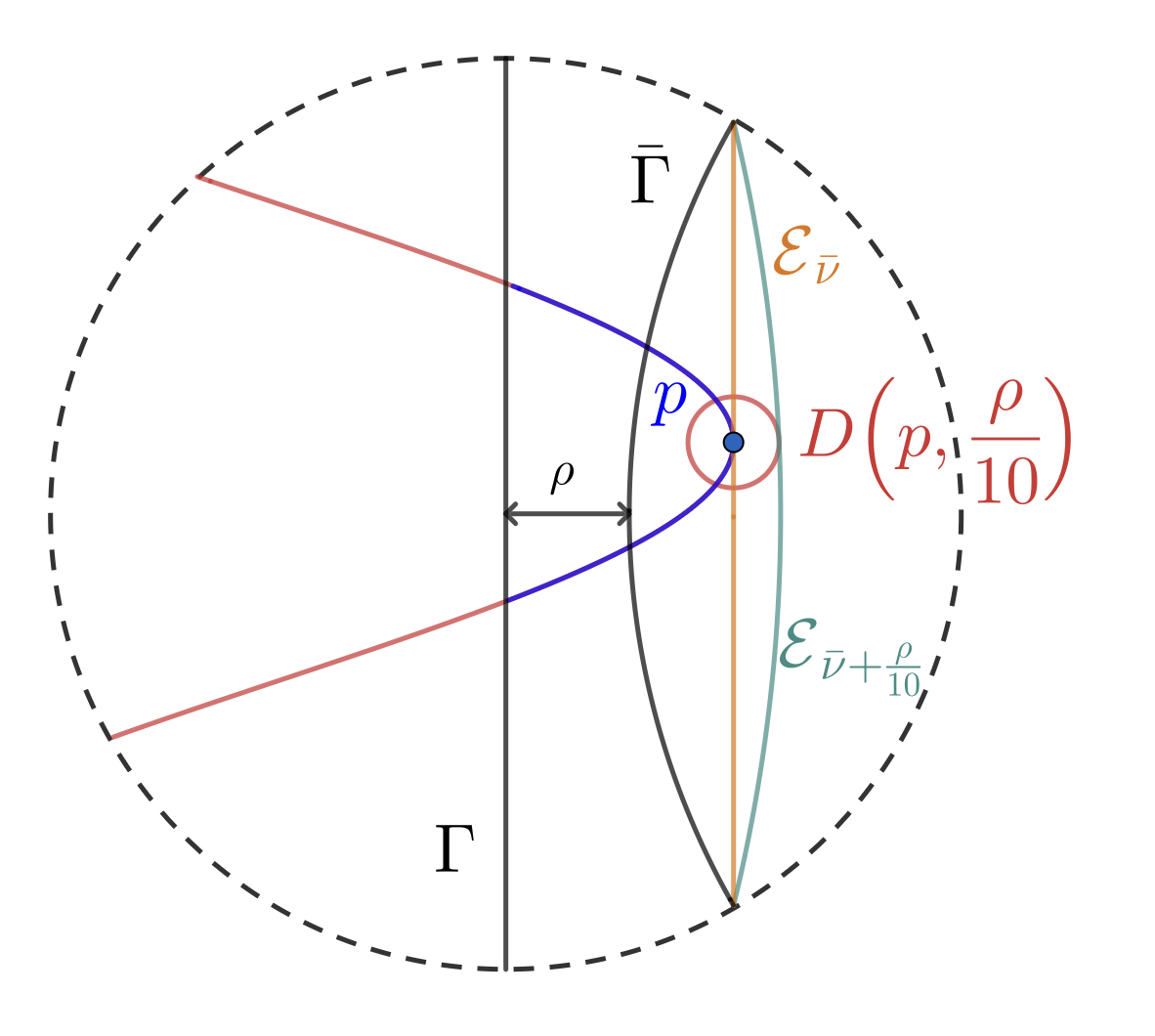}
\end{center}
\end{minipage}
\caption{The constructions in the proof of Lemma \ref{nonfinitelemma}.}
\label{fig:Ab}
\end{figure}

Abusing the notations, let $\Delta, \Gamma$ and $\mathcal {H}_1$ denote the projection of $\Delta, \Gamma_\pm$ and $\mathcal {H}^\pm_1$ in $\Hi \times \left\{0\right\}$. Let $\overline\Gamma$  in $\mathcal {H}_1$ be  a geodesic disjoint from $\Gamma$ to be fixed later, let $\mathcal E_\nu$, $\nu>0$, denote the curve that is equidistant from $\overline\Gamma$ so that the distance between $\overline\Gamma$ and $\mathcal E_\nu$ is $\nu$ and $\mathcal E_\nu$ is contained in the component of $[\mathbb{H}^2\times \{0\}]\setminus \overline\Gamma$ that does not contain $\Gamma$. Using this notation, we can fix $\overline\Gamma$ and $\overline{\nu}>0$ such that the following hold:
\begin{itemize}
    \item $\overline\Gamma\cap\Delta\neq \emptyset$;
    \item $\mathcal E_{\overline{\nu}}\cap\Delta\neq \emptyset$;
    \item for any $\nu>\overline{\nu}$,  $\mathcal E_\nu\cap\Delta= \emptyset$.
\end{itemize}
Let $p$ be a point in $\mathcal E_{\overline{\nu}}\cap\Delta$ and let $\overline\Gamma^\perp$ denote the geodesic through $p$ and perpendicular to $\mathcal E_{\overline{\nu}}$. Let $\rho$ denote the distance between $\Gamma$ and $\overline{\Gamma}$. Finally, let $R>0$ such that $\Delta\subset D(\vec 0,R)$, where $D(q,r)$ denote the hyperbolic disk centered at $q\in\Hi$ of radius $r$.

We now begin the proof of the case $\Delta:=\partial^+_{\infty} M \cap \mathcal {H}_1 \neq \emptyset$. Given $\tau>0$, let $M_\tau$ be $M$ translated down by $\tau$ and, given $\sigma>0$, let $M_\tau^\sigma:=M_\tau\cap [\mathbb{H}^2\times [-\sigma,+\infty)]$. Note that for any $\sigma>0$ there exists $\overline{\tau}:=\tau(\sigma)>0$ such that the following holds. \begin{itemize}
    \item for any $\nu\geq \frac{\rho}{10}+\overline{\nu}$, $M_{\overline{\tau}}^\sigma \cap [\mathcal E_\nu\times\mathbb{R}]=\emptyset $;
    \item $M_{\overline{\tau}}^\sigma \cap [D(p,\frac{\rho}{10})\times\{0\}]\neq \emptyset$.
\end{itemize}
See Figure~\ref{fig:Ab}. Let $\Sigmaw\in\Omega$ (see Section \ref{symmetric-translators} for the definition) so that $\Sigmaw$ is tangent to $\mathcal E_{\frac{\rho}{10}+\overline{\nu}}\times\mathbb R$ along 
$ \mathcal E_{\frac{\rho}{10}+\overline{\nu}}\times \{0\} $ and $\partial_\infty^v \Sigmaw\subset \partial_\infty \overline{\Gamma}\times \mathbb{R}$. Fix $\sigma, \overline\tau>0$ so that all points in  $\Sigmaw $    have height greater than $-\frac{\sigma}{2}$. Give $\mu\geq 0$, let  $\Sigmaw^\mu$ denote the hyperbolically translated  $\Sigmaw $ by a distance $\mu$ along the geodesic $\overline\Gamma^\perp$ in the direction of $p$. Then, by construction, the family $\Sigmaw^\mu $, $\mu\in[0,2\frac{\rho}{10}]$, is a continuous family of translators such that \[
\Sigmaw^0 \cap M_{\overline{\tau}}^\sigma=\emptyset \quad \text{and} \quad \Sigmaw^\mu \cap [\partial M_{\overline{\tau}}^\sigma\cup \partial_\infty M_{\overline{\tau}}^\sigma] =\emptyset.\]
However, by construction, since $[D(p,\frac{\rho}{10})\times\{0\}]$ is ``in between'' $\Sigmaw^0 $ and $\Sigmaw^{2\frac{\rho}{10}} $ this family of translators must intersect $M_{\overline{\tau}}^\sigma$ a first time at an interior point. This contradicts the tangency principle, Theorem~\ref{tangency}, and concludes the proof of the case  $\Delta:=\partial^+_{\infty} M \cap \mathcal {H}_1 \neq \emptyset$.

We now prove the case $\Delta:=\partial^-_{\infty} M \cap \mathcal {H}_1 \neq \emptyset$. Choose $\Sigmaw\in\Omega$ as before. 
Recall that given $s>0$ there exists $l(s)>0$ such that the distance between $\partial D(\vec 0,R )\times\mathbb{R}$ and the set $\Sigmaw\cap [\Hi\times \{l(s)\}]$ is greater than $s$.

Given $\tau>0$, let $M_\tau$ be $M$ translated up by $\tau$ and, given $\sigma>0$, let $M_\tau^\sigma:=M_\tau\cap [\mathcal{H}_1\times (-\infty,\sigma]]$. Note that for any $\varepsilon, \sigma>0$ there exists $\overline{\tau}:=\tau(\varepsilon, \sigma)>0$ such that the following holds. 
\begin{itemize}
    \item for any $\nu\geq \frac{\rho}{10}+\overline{\nu}$, $M_{\overline{\tau}}^\sigma \cap [\mathcal E_\nu\times\mathbb{R}]=\emptyset $;
    \item $M_{\overline{\tau}}^\sigma \cap [D(p,\frac{\rho}{10})\times\{0\}]\neq \emptyset$;
       \item for any $r\geq \varepsilon$, $M_{\overline{\tau}}^\sigma \cap [\partial D(\vec 0,R +r )\times \mathbb{R}]=\emptyset $.
\end{itemize}

With these notations in mind, set $s=\rho$, $\overline{l}:=l(\rho)$, $\overline{\varepsilon}:=\varepsilon(\frac\rho{10})$, $\overline{\sigma}:=\sigma(\overline{l})$ and $\overline\tau:=\overline\tau(\overline{\varepsilon},\overline{\sigma})$. Give $\mu\geq 0$, let  $\Sigmaw^\mu$ denote the hyperbolically translated  $\Sigmaw $ by a distance $\mu$ along the geodesic $\overline\Gamma^\perp$ in the direction of $p$.  Then, by construction, the family $\Sigmaw^\mu $, $\mu\in[0,2\frac{\rho}{10}]$, is a continuous family of translators such that \[
\Sigmaw^0 \cap M_{\overline{\tau}}^\sigma=\emptyset \quad \text{and} \quad \Sigmaw^\mu \cap [\partial M_{\overline{\tau}}^\sigma\cup \partial_\infty M_{\overline{\tau}}^\sigma] =\emptyset.\]
However, by construction, since $[D(p,\frac{\rho}{10})\times\{0\}]$ is ``in between'' $\Sigmaw^0 $ and $\Sigmaw^{2\frac{\rho}{10}} $ this family of translators must intersect $M_{\overline{\tau}}^\sigma$ a first time at an interior point. Exactly like in the previous case, this leads to a contradiction and this contradiction finishes the proof of the remaining case, that is when case   $\Delta:=\partial^-_{\infty} M \cap \mathcal {H}_1 \neq \emptyset$.

\end{proof}

 Before we use Lemma~\ref{nonfinitelemma} to prove Theorem~\ref{main2}, we are going to describe some other applications.
As a consequence of Lemma~\ref{nonfinitelemma} we have the following proposition.

\begin{proposition}\label{vertical2}
Let $M$ be a properly immersed translator in $\HitR$ and let $\alpha$ be a closed subset of $\partial_\infty \Hi$ such that $\partial^v_\infty M\subset \alpha\times \mathbb{R}$ and $\partial_\infty^{c,\pm} M\subset \alpha\times \{\pm\infty\}$. Let $\beta$ be a connected component of $\partial_\infty \Hi\setminus \alpha$, let $p,q$ be the end points of $\beta$ and let $\mathcal{H}_{pq}$ be the component of $\Hi\setminus \Gamma_{pq}$ that contains $\beta$, where $\Gamma_{pq}$ denotes complete geodesic in $\Hi$ such that $\partial \Gamma_{pq}=\{p, q\}$. Then
$M\cap[\mathcal{H}_{pq}\times \mathbb{R}]=\emptyset$.
\end{proposition}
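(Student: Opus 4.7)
The plan is to argue by contradiction using elements of the family $\Omega$ of hyperbolic translating catenoids as barriers, in the spirit of Theorem~\ref{finiteboundary} and Lemma~\ref{nonfinitelemma}. Suppose there exists $x_0=(h_0,s_0)\in M\cap[\mathcal{H}_{pq}\times\mathbb{R}]$. I first pick $p_1,p_2$ in the open arc $\beta$, close enough to $p$ and $q$ respectively that the geodesic $\mathcal{E}_0\subset\mathcal{H}_{pq}$ with asymptotic endpoints $p_1,p_2$ separates $h_0$ from $\Gamma_{pq}$; that is, $h_0$ lies in the half-plane $B$ of $\Hi\setminus\mathcal{E}_0$ whose asymptotic boundary is contained in $\beta$. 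For each $r_0>0$ and $t_0\in\mathbb{R}$, let $\Sigmaw_{r_0,t_0}\in\Omega$ be the hyperbolic translating catenoid built on $\mathcal{E}_0$ with parameter $r_0$, wedge $D_{r_0}=\bigcup_{r>r_0}\mathcal{E}_r\subset B$, and minimum height $t_0$. Since $\{p_1,p_2\}\subset\beta$ and $\alpha\cap\beta=\emptyset$, the asymptotic boundary of $\Sigmaw_{r_0,t_0}$, contained in $\{p_1,p_2\}\times[t_0,\infty)\cup[\beta\times\{+\infty\}]$, is disjoint from $\partial_\infty M$; this disjointness persists throughout the deformation below.

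The first key step is to show that $\Sigmaw_{r_0,0}\cap M=\emptyset$ for $r_0$ sufficiently large. Let $[p_1,p_2]_\beta$ denote the compact sub-arc of $\beta$ between $p_1$ and $p_2$. The hypotheses $\partial_\infty^v M\subset\alpha\times\mathbb{R}$ and $\partial_\infty^{c,\pm} M\subset\alpha\times\{\pm\infty\}$, together with $[p_1,p_2]_\beta\cap\alpha=\emptyset$, imply that $\overline{M}$ (the closure in the compactification of $\HitR$) is disjoint from the compact set $[p_1,p_2]_\beta\times\overline{\mathbb{R}}$. A standard compactness argument in the Hausdorff compactification then produces an open neighborhood $U\subset\overline{\Hi}$ of $[p_1,p_2]_\beta$ such that $[U\cap\Hi]\times\mathbb{R}$ is disjoint from $M$. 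Since $\overline{D_{r_0}}$ collapses onto $[p_1,p_2]_\beta$ in $\overline{\Hi}$ as $r_0\to\infty$, for $r_0$ large $D_{r_0}\subset U\cap\Hi$, and hence $\Sigmaw_{r_0,0}\subset D_{r_0}\times\mathbb{R}$ is disjoint from $M$.

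Conversely, $\Sigmaw_{r_0,t_0}\cap M\ne\emptyset$ for suitable parameters: fix any $r_0^*\in(0,d(h_0,\mathcal{E}_0))$ so that $h_0\in D_{r_0^*}$, let $h^-$ denote the height of the lower sheet of $\Sigmaw_{r_0^*,0}$ at the horizontal point $h_0$, and set $t_0^*=s_0-h^-$. Then $x_0\in\Sigmaw_{r_0^*,t_0^*}\cap M$. I now connect $(r_0^\infty,0)$ (with $r_0^\infty$ large enough for the previous step) to $(r_0^*,t_0^*)$ by a continuous path in $(0,\infty)\times\mathbb{R}$; this induces a continuous one-parameter family of translators $\Sigmaw^\tau$, $\tau\in[0,1]$, with $\Sigmaw^0\cap M=\emptyset$ and $\Sigmaw^1\cap M\ne\emptyset$. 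Setting $\tau_*=\inf\{\tau\in[0,1]:\Sigmaw^\tau\cap M\ne\emptyset\}$, continuity yields $\Sigmaw^{\tau_*}\cap M\ne\emptyset$, and since neither finite boundary nor asymptotic boundary can contribute, the first contact point lies in the interior of both surfaces. The tangency principle, Theorem~\ref{tangency}, then forces $\Sigmaw^{\tau_*}=M$, contradicting the fact that their asymptotic boundaries differ.

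The main technical obstacle is the initial disjointness step, because a priori $M$ may have horizontal asymptotic points in $\mathcal{H}_{pq}\times\{\pm\infty\}$ accumulating near $D_{r_0}$. The resolution is that by hypothesis $\overline{M}\cap[\overline{\beta}\times\overline{\mathbb{R}}]\subset\{p,q\}\times\overline{\mathbb{R}}$: choosing $p_1,p_2$ bounded away from $p$ and $q$ guarantees uniform separation of $[p_1,p_2]_\beta\times\overline{\mathbb{R}}$ from $\overline{M}$ in the compactification, producing the horizontal collar disjoint from $M$ that drives the argument.
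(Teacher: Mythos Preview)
Your approach is genuinely different from the paper's, and most of the pieces are in place, but there is a real gap in the first-contact step that your collar argument does not close.

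The paper's proof is a two-step reduction. First it invokes Lemma~\ref{nonfinitelemma} (applied to the piece of $M$ lying over a half-plane $\mathcal{H}_{p'q'}$ with $p',q'$ in the interior of $\beta$) to deduce that $\partial_\infty^{\pm}M\cap\mathcal{H}_{pq}\times\{\pm\infty\}=\emptyset$: if this intersection were nonempty it would be unbounded, forcing a corner point of $M$ over the interior of $\beta$, contrary to $\partial_\infty^{c,\pm}M\subset\alpha\times\{\pm\infty\}$. Once the horizontal asymptotic boundary is known to avoid $\mathcal{H}_{pq}$, the region $\mathcal{H}_{pq}\times\mathbb{R}$ is swept by vertical geodesic planes $\Gamma'\times\mathbb{R}$ (which are translators), and the standard tangency argument applies cleanly because \emph{all} asymptotic boundaries are under control.

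Your argument tries to bypass Lemma~\ref{nonfinitelemma} by using barriers $\Sigmaw_{r_0,t_0}\in\Omega$ with $r_0>0$, exploiting that these have empty horizontal asymptotic boundary. The collar $U\times\overline{\mathbb{R}}$ you build does establish initial disjointness, and it does show $\partial_\infty\Sigmaw^{\tau}\cap\overline{M}=\emptyset$ throughout. But the sentence ``continuity yields $\Sigmaw^{\tau_*}\cap M\ne\emptyset$'' is not justified: you must rule out that intersection points $x_n\in\Sigmaw^{\tau_n}\cap M$ escape to infinity as $\tau_n\downarrow\tau_*$. The hypotheses say nothing about $\partial_\infty^{\pm}M$, so $M$ may well have horizontal asymptotic points $(h,+\infty)$ with $h\in D_{r_0^*}$; a sequence $x_n=(h_n,s_n)\in M$ with $h_n\to h$ and $s_n\to+\infty$ is exactly the kind of escape your argument has not excluded. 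Your final paragraph flags horizontal asymptotic points only as an obstacle to \emph{initial disjointness}, whereas the same obstruction threatens the compactness needed for a finite first-contact point. To close the gap you would need a uniform statement of the form: for $\tau$ in the compact parameter interval and any compact $K\subset\Hi\setminus U$, there exists $T$ with $\Sigmaw^{\tau}\cap(K\times[T,\infty))=\emptyset$. This is plausible from the structure of the family $\Omega$ (the profile curves have $r\to\infty$ as $t\to\infty$, uniformly for $r_0$ bounded away from $0$), but it is an extra ingredient you have neither stated nor proved. The paper sidesteps the issue entirely by first clearing out $\partial_\infty^{\pm}M$ from $\mathcal{H}_{pq}$ via Lemma~\ref{nonfinitelemma}.
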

\begin{proof}
Since $\partial^v_\infty M\subset \alpha\times \mathbb{R}$ and $\partial_\infty^{c,\pm} M\subset \alpha\times \{\pm\infty\}$, it follows from Lemma~\ref{nonfinitelemma} that $\partial_\infty^\pm M \cap \mathcal{H}_{pq}\times \{\pm\infty\}=\emptyset$.  Note that the region of $\mathcal{H}_{pq}\times\mathbb{R}$ is foliated by vertical geodesic planes. Therefore, if $M\cap[\mathcal{H}_{pq}\times\mathbb{R}]\neq\emptyset$, a standard argument using Theorem~\ref{tangency} would lead to a contradiction. This finishes the proof of the proposition.
\end{proof}

 \begin{figure}[htbp!]
\begin{center}
\includegraphics[width=0.45\linewidth]{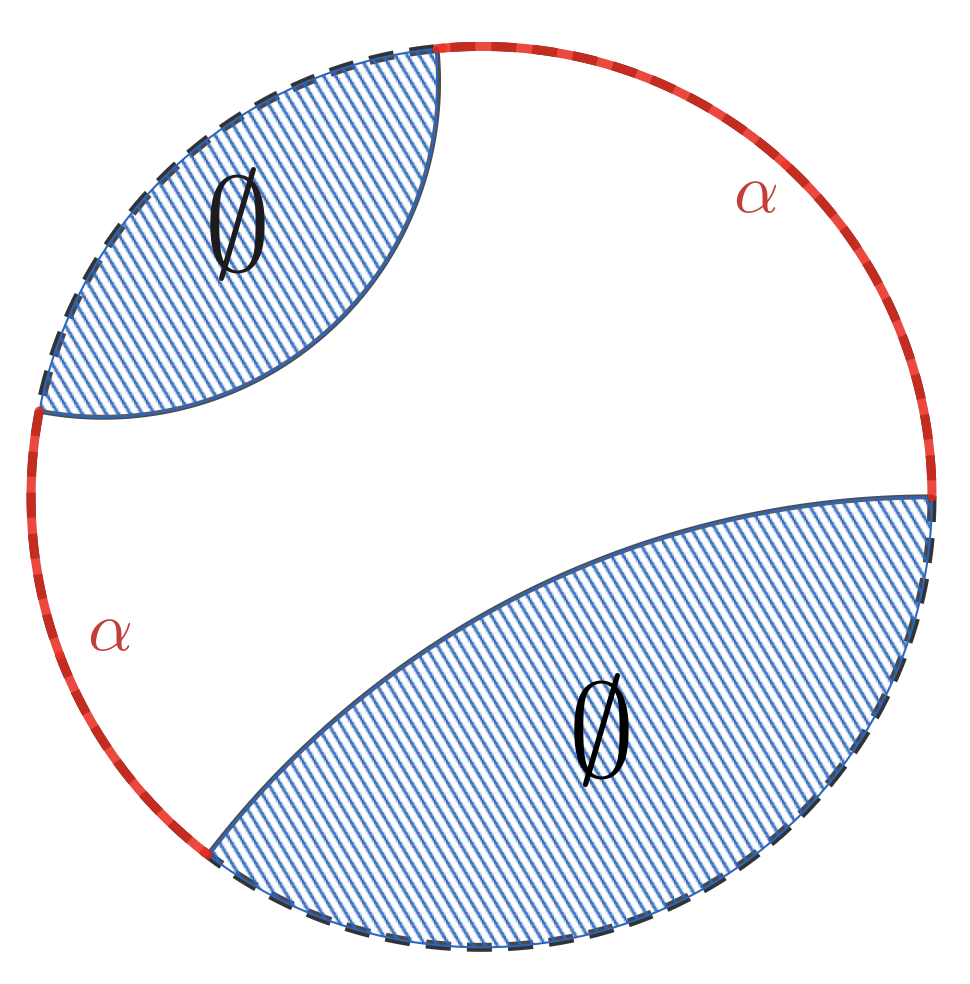}
\end{center}
\caption{A graphical representation of Proposition \ref{vertical2}: in blue the empty regions.}
\label{fig:prop12}
\end{figure}

Using Proposition~\ref{vertical2} we can give a different proof of the fact that a translator $M$ cannot be contained in a vertical cylinder over a compact domain or in a horocylinder; see Theorems 17 and 18 in~\cite{lira-martin-2019}, and Theorem 5.9 in \cite{bueno-2018}. To prove that it cannot be contained in a vertical cylinder using Proposition~\ref{vertical2}, observe that in this case $\alpha$ is empty and therefore $M\cap[\mathcal{H}_{pq}\times \mathbb{R}]=\emptyset$ for any $p,q\in \partial_\infty \Hi$. To prove that it cannot be contained in a horocylinder, observe that,  in this case, we have $\alpha=\{r\}$,  where $r$ is the point at infinity of the horocycle. Therefore, $M\cap[\mathcal{H}_{pq}\times \mathbb{R}]=\emptyset$ for any $p,q\in \partial_\infty \Hi$ arbitrarily close to $r\in \partial_\infty \Hi$.

Finally, using Proposition~\ref{vertical2} we can also prove this characterization of vertical geodesic planes.

\begin{proposition}\label{uniqueness}
Let $\Gamma\subset \Hi$ be a complete geodesic and let $\Gamma_1, \Gamma_2\subset\Hi$ be two equidistant curves, equidistant from $\Gamma$. Let $U\subset\Hi$ be the closed region such that $\partial U=\Gamma_1\cup\Gamma_2$. Then, there are two cases. If $\Gamma\not\subset U$ then there is no translator in $U\times \mathbb R$. If $\Gamma\subset U$ then $\Gamma\times \mathbb R$ is the unique translator in $U\times \mathbb R$.
\end{proposition}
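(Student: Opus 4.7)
The plan is to reduce everything to a direct application of Proposition~\ref{vertical2}. The key observation is that the two equidistant curves $\Gamma_1$ and $\Gamma_2$ share the same pair of asymptotic endpoints as $\Gamma$, which I denote $p_1, p_2 \in \partial_\infty \Hi$. Consequently, the asymptotic boundary of $U$ inside $\partial_\infty \Hi$ reduces to $\alpha := \{p_1, p_2\}$, and for any properly immersed translator $M \subset U \times \mathbb{R}$ we automatically get $\partial^v_\infty M \subset \alpha \times \mathbb{R}$ and $\partial^{c,\pm}_\infty M \subset \alpha \times \{\pm\infty\}$, which is exactly the hypothesis of Proposition~\ref{vertical2}.

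I would then apply Proposition~\ref{vertical2} with this choice of $\alpha$. The complement $\partial_\infty \Hi \setminus \alpha$ has exactly two connected components, each an open arc with endpoints $p_1, p_2$. For each of them, the associated complete geodesic $\Gamma_{p_1 p_2}$ is $\Gamma$ itself, and the two regions $\mathcal{H}_{p_1 p_2}$ that appear are precisely the two components of $\Hi \setminus \Gamma$. Proposition~\ref{vertical2} therefore says that $M$ is disjoint from both half-planes crossed with $\mathbb{R}$, which forces $M \subset \Gamma \times \mathbb{R}$.

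The dichotomy in the conclusion is then immediate. If $\Gamma \not\subset U$, then $\Gamma_1, \Gamma_2$ lie on the same side of $\Gamma$, so $U \cap \Gamma = \emptyset$; combined with $M \subset \Gamma \times \mathbb{R}$ and $M \subset U \times \mathbb{R}$ this yields $M = \emptyset$, ruling out any translator in $U \times \mathbb{R}$. If instead $\Gamma \subset U$, then $\Gamma \times \mathbb{R}$ is itself a translator contained in $U \times \mathbb{R}$, and any translator $M \subset U \times \mathbb{R}$ still satisfies $M \subset \Gamma \times \mathbb{R}$; the tangency principle (Theorem~\ref{tangency}), applied at any interior point of $M$ with $\Gamma \times \mathbb{R}$ as the comparison surface, upgrades this set-theoretic containment to the equality $M = \Gamma \times \mathbb{R}$.

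I do not expect a genuine obstacle here, since the statement is a corollary of Proposition~\ref{vertical2}. The only preliminary worth flagging is the standard hyperbolic fact that equidistant curves of a geodesic share that geodesic's endpoints at infinity (immediate in the Poincar\'e disk model); this is what collapses the asymptotic boundary of $U$ in $\partial_\infty \Hi$ to the two-point set $\alpha$ and unlocks the hypothesis of Proposition~\ref{vertical2}.
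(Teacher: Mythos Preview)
Your proposal is correct and follows essentially the same route as the paper: both identify $\alpha=\{p_1,p_2\}=\partial_\infty\Gamma=\partial_\infty\Gamma_1=\partial_\infty\Gamma_2$, apply Proposition~\ref{vertical2} to each of the two arcs of $\partial_\infty\Hi\setminus\alpha$, and conclude that any translator in $U\times\mathbb{R}$ must lie in $\Gamma\times\mathbb{R}$. Your write-up is in fact a bit more explicit than the paper's, which does not spell out the dichotomy or the final tangency-principle step.
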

\begin{proof}
Let $\partial_\infty \Gamma:=\{p,q\}$ and let $M$ be a translator contained in $U\times \mathbb R$. Observe that $\partial_{\infty}\Gamma_1=\partial_{\infty}\Gamma_2=\partial_{\infty}\Gamma$, hence $\partial^v_\infty M\subset \{p, q\}\times \mathbb{R}$ and $\partial_\infty^c M\subset [\{p\}\times \{\pm\infty\}]\cup [\{q\}\times \{\pm\infty\}]$. We have that $\partial_\infty \Hi\setminus \{p, q\}] $ consists of two open arcs, $\beta_1$ and $\beta_2$. Therefore applying Proposition~\ref{vertical2}, first with $\beta:=\beta_1$ and then with $\beta:=\beta_2$, gives that $M$ must be $\Gamma\times \mathbb{R}$. This finishes the proof of the proposition. 
\end{proof}

 \begin{figure}[htbp!]
\begin{center}
\includegraphics[width=0.85\linewidth]{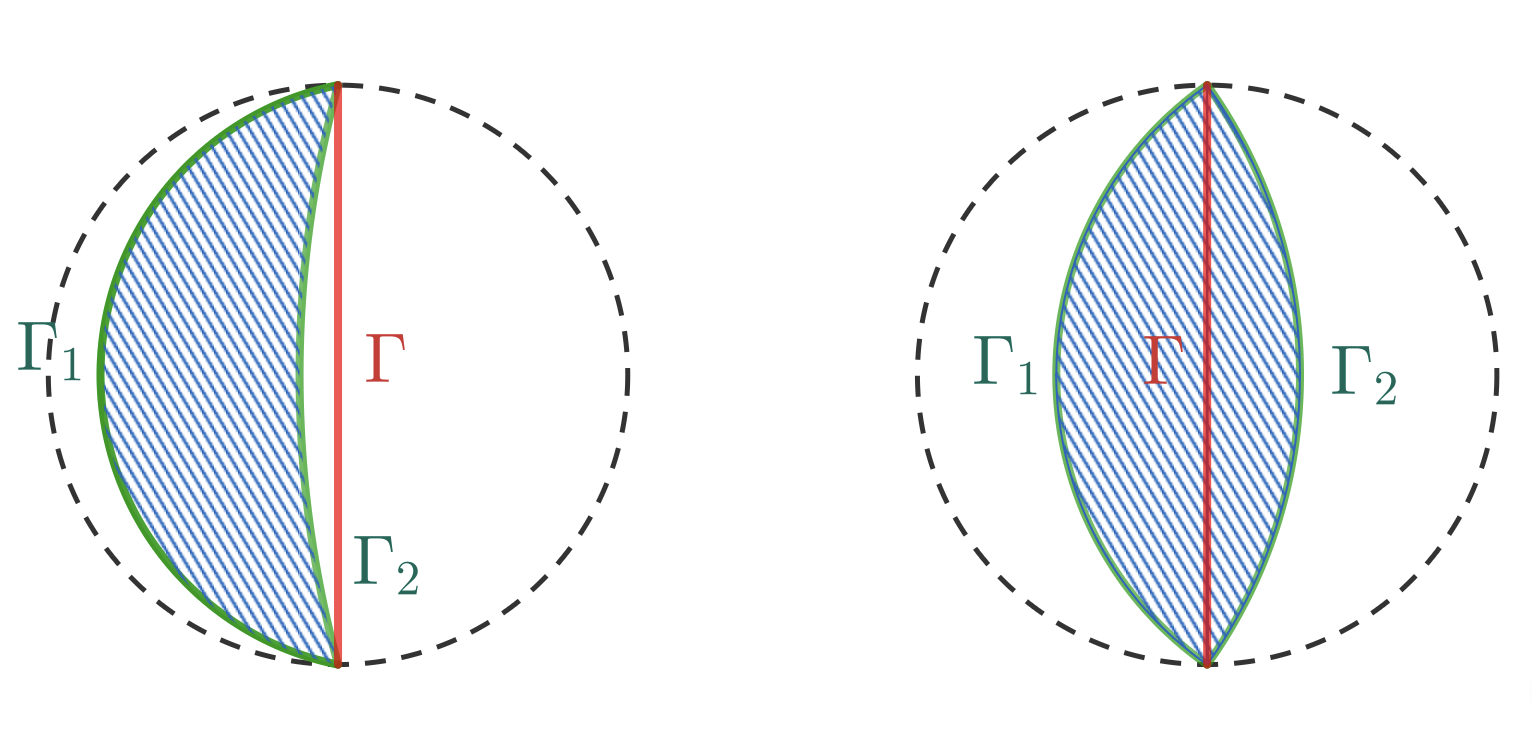}
\end{center}
\caption{A graphical representation of Proposition \ref{uniqueness}. In blue two examples of domains $U\subset\mathbb H^2$ bounded by equidistant lines: on the left there are no translators in $U\times\mathbb R$, on the right $\Gamma\times\mathbb R$ is the unique translator in $U\times\mathbb R$.}
\label{fig:prop13}
\end{figure}

 We now use Lemma~\ref{nonfinitelemma} to prove Theorem~\ref{main2}.

\begin{proof}[Proof of Theorem~\ref{main2}]

Let us first consider $\partial_\infty^+ M$. Arguing by contradiction, assume that $\alpha$ is not a geodesic arc. This being the case, there exists a complete geodesic $\Gamma$ and a compact sub-arc $\beta\subset \alpha$ such that $\beta \not\subset \Gamma$ and $\partial \beta\in\Gamma$. See Figure~\ref{fig:Cb}.

 \begin{figure}[htbp!]
\begin{center}
\includegraphics[width=0.6\linewidth]{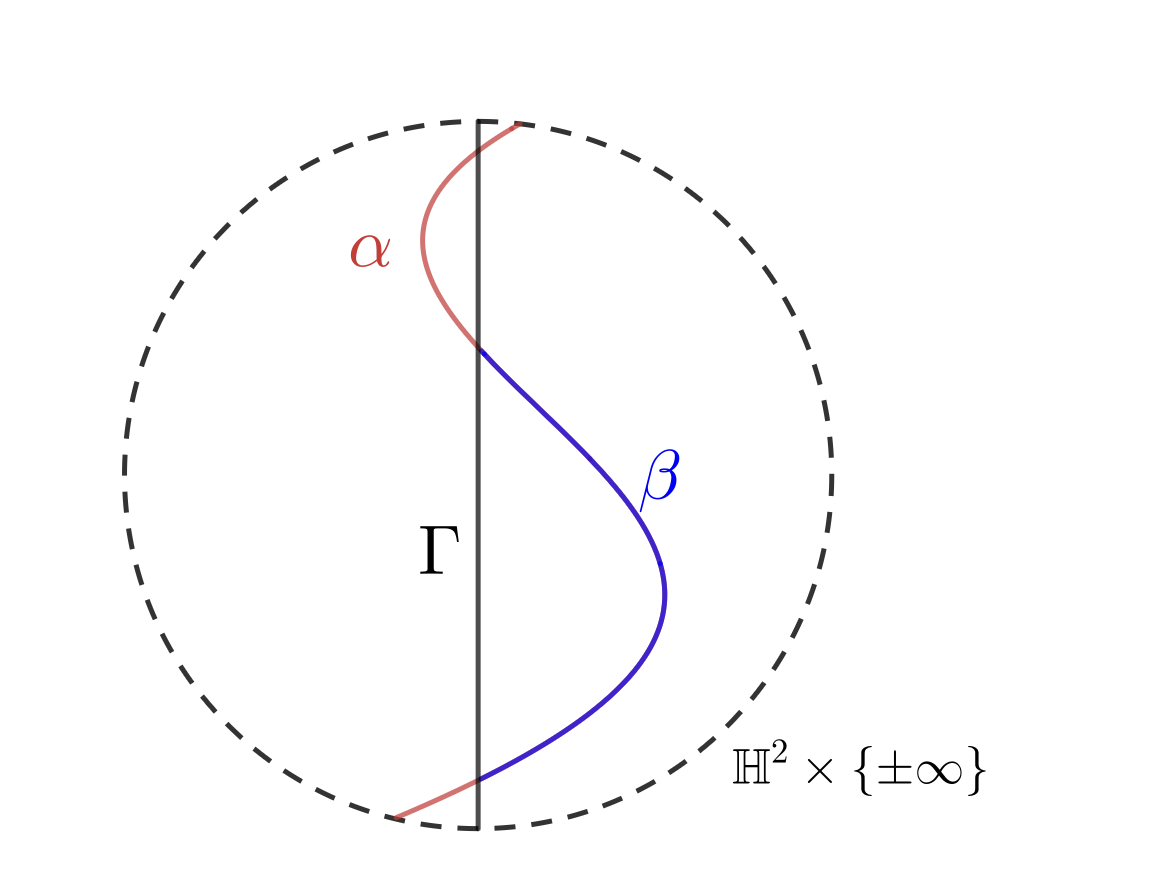}
\end{center}
\caption{The construction in the proof of Theorem \ref{main2}.}
\label{fig:Cb}
\end{figure}

By abusing the notation, let $M$ denote a connected component of $M\setminus \Gamma\times \mathbb R$ that has $\beta$ in its non-finite asymptotic boundary. Since $\overline{[\partial_{\infty}^{+} M\setminus \alpha ]}\cap \alpha =\emptyset$ then $\overline{[\partial_{\infty}^{+} M\setminus \alpha ]}\cap \beta=\emptyset$ and after applying a sequence of downward translations, and possibly replacing $\Gamma$ with a nearby geodesic, we can assume that $\beta$ is the only asymptotic boundary component of a connected component of $\Sigma\cap \Hi\times [0,+\infty)$ that has $\beta$ in its  asymptotic boundary and that 
\[\partial \Sigma\subset [\Gamma \times\mathbb R]\cup [\Hi\times {0}\text{\, and\, }
     \partial_\infty^v\Sigma=\emptyset.\]
     
In particular, $\Sigma$ satisfies the hypothesis of~Lemma~\ref{nonfinitelemma}. We can then apply Lemma~\ref{nonfinitelemma} to such component to obtain a contradiction. This finishes the proof for $\partial_\infty^+ M$. The proof for $\partial_\infty^- M$ is exactly the same but it uses upward translations instead of downward translations. 
\end{proof}

The next result and the second bullet in Theorem~\ref{main} are a simple corollary of Theorem~\ref{main2}.

\begin{corollary}\label{nonfinitethm} 
Let $M$ be a properly immersed translator in $\HitR$ with compact boundary and suppose that its positive horizontal asymptotic boundary, $\partial_\infty^+ M$, respectively its negative horizontal asymptotic boundary, $\partial_\infty^+ M$, is a proper family of disjoint immersed continuous curves and let $\alpha$ be a connected component of $\partial_\infty^+ M$, respectively $\partial_\infty^+ M$, so that $M\cup \alpha $ is a continuous surface with boundary. Then $\alpha$ is complete geodesic in $\Hi \times \left\{ + \infty  \right\}$, respectively $\Hi \times \left\{ - \infty  \right\}$.
\end{corollary}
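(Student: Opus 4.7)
The plan is to apply Theorem~\ref{main2} directly to the connected component $\alpha$. I focus on the positive case; the negative case is analogous and is handled with upward translations in place of downward ones, exactly as in the proof of Theorem~\ref{main2}. The main task is to verify the hypotheses of Theorem~\ref{main2} for $\alpha$ and then to upgrade the conclusion from ``geodesic arc'' to ``complete geodesic''.

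First I would check that the separation condition $\overline{[\partial_\infty^+ M \setminus \alpha]} \cap \alpha = \emptyset$ holds. This follows from the assumption that $\partial_\infty^+ M$ is a \emph{proper} family of \emph{disjoint} immersed continuous curves: properness implies only finitely many components can meet any compact subset of $\Hi \times \{+\infty\}$, and disjointness of components then prevents $\alpha$ from accumulating on any other component of $\partial_\infty^+ M$. The other required hypothesis, that $M \cup \alpha$ is a continuous surface with boundary, is given directly.

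There is a minor discrepancy to address: Theorem~\ref{main2} is stated for complete translators, whereas here $M$ is allowed to have compact boundary. This is not a genuine obstacle. The proof of Theorem~\ref{main2} reduces to an application of Lemma~\ref{nonfinitelemma}, whose hypotheses explicitly permit a finite boundary contained in $[\Gamma \times \mathbb R] \cup [\Hi \times [-N,N]]$; since $\partial M$ is compact, one chooses $N$ large enough so that $\partial M \subset \Hi \times [-N,N]$, and the argument proceeds unchanged. Thus Theorem~\ref{main2} applies and yields that $\alpha$ is a geodesic arc in $\Hi \times \{+\infty\}$.

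It remains to upgrade ``geodesic arc'' to ``complete geodesic''. For this I would use that $\alpha$ is a connected component of a proper family of disjoint immersed continuous curves, and that $M \cup \alpha$ is a continuous surface having $\alpha$ as part of its boundary. Together, these force $\alpha$ to be a properly immersed $1$-manifold without interior endpoints in $\Hi \times \{+\infty\}$: any putative endpoint of $\alpha$ in the interior of $\Hi \times \{+\infty\}$ would contradict either properness of the family or the fact that $\alpha$ is the boundary of a continuous surface there. A geodesic arc with no interior endpoints in $\Hi$, being properly immersed, must have both of its ends converging to distinct points of $\partial_\infty \Hi$, hence is a complete geodesic. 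The only subtle point, and the place that deserves the most care, is confirming that the combined hypotheses ``proper family of disjoint immersed continuous curves'' and ``$M \cup \alpha$ continuous surface with boundary'' really do rule out the degenerate possibilities of a half-line or a compact geodesic segment; once that verification is in hand, the corollary is immediate.
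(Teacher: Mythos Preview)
Your proposal is correct and follows the same approach as the paper, which simply declares the result ``a simple corollary of Theorem~\ref{main2}'' without spelling out details. You have supplied exactly the verifications the paper leaves implicit: deriving the separation condition from properness and disjointness, observing that the compact boundary is harmless because the proof of Theorem~\ref{main2} passes through Lemma~\ref{nonfinitelemma} (which already accommodates boundary in a slab), and upgrading the conclusion to a complete geodesic by noting that a boundary component of a surface with boundary is a $1$-manifold without endpoints, hence a properly embedded geodesic arc in $\Hi$ with no interior endpoints must be complete.
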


\noindent
{\bf Acknowledgments.} Giuseppe Pipoli was partially supported by INdAM - GNSAGA Project, codice CUP E53C24001950001 and PRIN project 20225J97H5. João Paulo dos Santos was partially supported by CNPq, Brazil, grant number 402589/2022-0, and he thanks the Mathematics Department of King's College London for the hospitality, where part of this work was conducted.

\end{document}